\newcommand{%
	\input{Figures/.tikz}%
}[1]{%
	\input{Figures/#1.tikz}%
}
\newenvironment{customlegend}[1][]{%
	\begingroup
	\csname pgfplots@init@cleared@structures\endcsname
	\pgfplotsset{#1}%
}{%
	\csname pgfplots@createlegend\endcsname
	\endgroup
}%
\newlength\figureheight
\newlength\figurewidth
\newlength\fheight
\newlength\fwidth
\def\addlegendimage{\csname pgfplots@addlegendimage\endcsname}
\newcommand{\bbm}{\begin{bmatrix}}
	\newcommand{\ebm}{\end{bmatrix}}
\newcommand{\bhx}{\mathbf{\hat{x}}}
\newcommand{\bhy}{\mathbf{\hat{y}}}
\newcommand{\bhH}{\mathbf{\hat{H}}}
\newcommand{\bhA}{\mathbf{\hat{A}}}
\newcommand{\bhB}{\mathbf{\hat{B}}}
\newcommand{\bhC}{\mathbf{\hat{C}}}
\newcommand{\bhN}{\mathbf{\hat{N}}}
\newcommand{\chK}{\mathcal \hK}
\newcommand{\chL}{\mathcal \hL}
\newcommand{\chP}{\mathcal \hP}
\newcommand{\bten}[1]{\boldsymbol{\mathcal{#1}}}
\newcommand{\pushright}[1]{\ifmeasuring@#1\else\omit\hfill$\displaystyle#1$\fi\ignorespaces}
\newcommand{\pushleft}[1]{\ifmeasuring@#1\else\omit$\displaystyle#1$\hfill\fi\ignorespaces}
\newcommand*\circled[1]{\tikz[baseline=(char.base)]{
		\node[shape=circle,draw,inner sep=0.2pt] (char) {#1};}
}
\newcommand\kronF[2]{#1^{\circled{\tiny{#2}}}}
\newcommand{\bbe}{\ensuremath{\mathbf{e}}}
\newcommand{\mphi}{\mathrm{\Phi}}
\renewcommand{\hat}[1]{\widehat{#1}}
\renewcommand{\tilde}[1]{\widetilde{#1}}
\newtheorem{theorem}{Theorem}
\newtheorem{definition}{Definition}
\newtheorem{lemma}{Lemma}
\newtheorem{remark}{Remark}
\DeclareOldFontCommand{\rm}{\normalfont\rmfamily}{\mathrm}
\DeclareOldFontCommand{\sf}{\normalfont\sffamily}{\mathsf}
\DeclareOldFontCommand{\tt}{\normalfont\ttfamily}{\mathtt}
\DeclareOldFontCommand{\bf}{\normalfont\bfseries}{\mathbf}
\DeclareOldFontCommand{\it}{\normalfont\itshape}{\mathit}
\DeclareOldFontCommand{\sl}{\normalfont\slshape}{\@nomath\sl}
\DeclareOldFontCommand{\sc}{\normalfont\scshape}{\@nomath\sc}
\begin{document}

\title{
Dominant Subspaces of High-Fidelity Nonlinear Structured Parametric Dynamical Systems and Model Reduction
}


\author[$\ddagger$]{Pawan Goyal}
\affil[$\ddagger$]{Max Planck Institute of Dynamics of Complex Technical Systems\authorcr
	\email{goyalp@mpi-magdeburg.mpg.de}, \orcid{0000-0003-3072-7780}}

\author[$\dagger$]{Igor Pontes Duff}
\affil[$\dagger$]{Max Planck Institute of Dynamics of Complex Technical Systems\authorcr
	\email{pontes@mpi-magdeburg.mpg.de}, \orcid{0000-0001-6433-6142}}

\author[$\dagger\dagger$]{Peter Benner}
\affil[$\dagger\dagger$]{Max Planck Institute of Dynamics of Complex Technical Systems, Otto-von-Guericke University Magdeburg \authorcr
	\email{benner@mpi-magdeburg.mpg.de}, \orcid{0000-0003-3362-4103}}

\abstract{
In this work, we investigate a model order reduction scheme for high-fidelity nonlinear structured parametric dynamical systems. More specifically, we consider a class of nonlinear dynamical systems whose nonlinear terms are polynomial functions, and the linear part corresponds to a linear structured  model, such as second-order, time-delay, or fractional-order systems. Our approach relies on the Volterra series representation of these dynamical systems. Using this representation, we identify the kernels and, thus, the generalized multivariate transfer functions associated with these systems. Consequently, we present results allowing the construction of reduced-order models whose generalized transfer functions interpolate these of the original system at pre-defined frequency points.  For efficient calculations, we also need the concept of a symmetric Kronecker product representation of a tensor and derive particular properties of them. Moreover, we propose an algorithm that extracts dominant subspaces from the prescribed interpolation conditions. This allows the construction of reduced-order models that preserve the structure. We also extend these results to parametric systems and a special case (delay in input/output).  We demonstrate the efficiency of the proposed method by means of various numerical benchmarks.
}

\keywords{Model order reduction, interpolation, polynomial dynamical systems, parametric systems, structured systems, tensor computation}
\novelty{\begin{itemize}
	\item 
\end{itemize}}
\maketitle


\section{Introduction}
Dynamical systems are the basic framework used for modeling, controlling, and analyzing a large variety of engineering processes. Due to the increasing use of dedicated computer-based modeling design software, numerical simulation is now used more frequently to understand the dynamics of a complex system and to shorten both development time and cost. However, the need for enhanced model accuracy inevitably leads to an increasing number of variables and resources, which entails a high computational cost. In this context, model order reduction (MOR) is a possible remedy for such complex simulations. Precisely, MOR aims to replace a complex high-fidelity model with a reduced-order model (ROM) that mimics a certain dynamical behavior of the original model and preserves its features. As a result, this alleviates the numerical burden and reduces the computational time.

Many MOR methods for (parametric) nonlinear systems are based on simulated data. This means that, for given inputs and parameters, \emph{snapshots} of the state vector $\bx$ are collected \cite{benner2020model}. Then, a low-dimensional dominant subspace is  determined by means of a singular value decomposition (SVD) of the matrix containing the collected snapshots as columns. Hence, a ROM is constructed via Galerkin projection. Among these methods, proper orthogonal decomposition is arguably the most favored method (see, e.g.,~\cite{morGubV17} for more details). Additionally, for nonlinear systems, this approach is often combined with hyper-reduction methods, such as EIM~\cite{morBarMNetal04}, DEIM~\cite{morChaS10}, and GNAT \cite{carlberg2013gnat}, allowing fast evaluation of nonlinear terms.  Also, for parametric problems, reduced basis methods have been successfully applied to several nonlinear systems, see, e.g., \cite{morQuaMN15}. Although these methods have been successful in several applications, they are \emph{input-dependent}, i.e., the quality of the ROMs  depends on the choices of input functions and parameters used to collect the snapshots.  Hence, it may be harder to obtain a ROM independent of inputs, which are suitable, e.g., for control problems. 

In this work, we focus on MOR methods that are \emph{input-independent}, i.e., ROMs can approximate the high-fidelity model behavior for all admissible inputs. The reader is referred to \cite{morBauBF14} for an overview of input-independent MOR methods.  These methods, broadly speaking, are divided into two classes: interpolation-based and balanced truncation approaches. In this work, we focus on the class of interpolation-based methods. For the class of nonlinear systems, interpolation-based MOR methods have been extended to certain classes of nonlinear systems, e.g., bilinear (see, e.g., \cite{morAntGI16,morBreD10}), parametric bilinear (see, e.g., \cite{morRodGB18}), quadratic-bilinear (see, e.g., \cite{morAhmBJ16,morBenB15,morGosA18}) and, more recently, (parametric-)polynomial systems (see, e.g., \cite{morBenG19,morBenG21}).  

Besides the non-linearities, in many applications, dynamical systems possess a particular dynamical structure, e.g., second-order, time-delay, and fractional-order systems. We call these systems \emph{structured}.  There exist several MOR methods for structured systems, allowing to preserve such dynamical structures in a ROM. We refer to  \cite{morChaLVD06,morEidSLetal07,morReiS08} for second-order systems and \cite{jarlebring2013model} for time-delay systems.  Moreover, in \cite{morBeaG09}, the authors propose a framework allowing interpolation-based MOR for a vast class of linear  structured systems. This framework was extended to the class of parametric linear structured systems in \cite{morAntBG10},  and a data-driven identification approach was proposed in \cite{morSchU18}. Additionally, in \cite{morBenGP19}, the authors have proposed an approach to determine the dominant subspaces of a given (parametric) linear structured model.
It is worth noticing that a balanced truncation approach is proposed in \cite{breiten2016structure} for this class of systems.

In this paper, we focus on MOR for structured systems with polynomial non-linearities.  To illustrate this class of systems, let us consider the  bilinear time-delay system presented in \cite{morGosPBetal19}, which is of the form:
\begin{subequations}\label{eq:bildelay}
	\begin{align}
		\dot{\bx}(t) - \bA_1 \bx(t) -  \bA_{2}\bx(t-\tau) &=  \bN \bx(t)\bu(t) +\bB \bu(t), \label{eq:bildelay_control}\\
		\by(t) &= \bC \bx(t), \label{eq:bildelay_obs}
	\end{align}
\end{subequations}
where $\bx(t) \in \R^n$, $\bu(t) \in \R^m$ and $\bu(t)\in \R^p$ are the state, input, and output vectors.
The left-hand side of \eqref{eq:bildelay_control} corresponds  to the linear part of the dynamics, governed by a time-delay structure. On the right-hand side, the term $\bN \bx(t) \bu(t)$ corresponds to a bilinear non-linearity. Our main goal is to determine a ROM for the system \eqref{eq:bildelay} that preserves its structure as well, i.e., to  search for a  surrogate model of the form:
\begin{subequations}\label{eq:ROMbildelay}
	\begin{align}
		\dot{\bhx}(t) - \bhA_1 \bhx(t) -  \bhA_{2}\bhx(t-\tau) &=  \bhN \bhx(t)\bu(t) +\bhB \bu(t), \label{eq:ROMbildelay_control}\\
		\bhy(t) &= \bhC \bhx(t), \label{eq:ROMbildelay_obs}
	\end{align}
\end{subequations}
where $\bhx(t) \in \R^r$ is the reduced state vector and $r\ll n$. This ROM should have a similar input-output behavior as the original one for all admissible inputs. Note that interpolation-based MOR for (parametric-) bilinear structured  systems has been  proposed in \cite{morBenGW2020par,morBenGW20}.
In this paper, we build upon the methodologies proposed in \cite{Wer21} for structured bilinear and quadratic-bilinear systems, and in \cite{morBenG21} for polynomial systems. 
Particularly, we focus on an interpolation-based MOR approach for structured systems with polynomial non-linearities. Additionally, by following the philosophy in \cite{morBenG19,morBenG21,morBenGP19}, we propose an algorithm enabling to determine the dominant subspace information from interpolation points.

The remaining structure of the paper is as follows. In \Cref{sec:setup}, we present the class of polynomial structured systems, and provide some results for symmetric tensor representations that play an important role in MOR.  In \Cref{sec:nonpar_interpolation}, we discuss the Volterra series representation of polynomial structured systems. This representation allows us to identify the system kernels, thus enabling us to define the generalized transfer functions of the system. Based on these, we present results that yield ROMs whose generalized transfer functions interpolate those of the original model at pre-defined interpolation points. Then, in \Cref{sec:par_interpolation}, these results are generalized to other classes, such as parametric and input-output delay systems. Moreover, in \Cref{sec:DROP}, we propose an algorithm to determine the dominant subspace information from a given large set of interpolation points. In \Cref{sec:Numerics}, we illustrate the efficiency of the proposed algorithm by means of three benchmark problems and compare it with the state-of-the-art. We conclude the paper with a summary of our contributions and future perspectives. 

We make use of the following notation in the paper:
\begin{itemize}
	\item Matrices and vectors are denoted with bold symbols, e.g., $\bA, \bB, \bv, \bw$.
	\item The $i$th entry of a vector $\bq\in \Rn$ is denoted by $\bq_i$.
	\item The Kronecker product is denoted by `$\otimes$'.
	\item $\bI_m$ is the identity matrix of size $m\times m$.
	\item $\bbe_i$ denotes the $i$th column of the identity matrix of appropriate size. 
	\item $\cV^{\circled{\tiny{$\xi$}}}$ is a short-hand notation for $\underbrace{\cV\otimes \cdots\otimes \cV}_{\xi-\text{times}}$, where $\cV$ is a vector/matrix.
\end{itemize}

\section{Problem Setting and Tensor Algebra}\label{sec:setup}
\subsection{Problem setting}
In this paper, we focus on  structured dynamical systems with polynomial non-linearity.  These systems can be written in the form: 
\begin{subequations}\label{eq:struct_nonlin_sys}
	\begin{align}
		\left(\cL \bx\right)(t)	&= \cP( \bx(t), \bu(t)) +\bB \bu(t), \label{eq:struct_nonlin_sys_dyn} 
		\\ 
		\by(t) 		  &= \bC \bx(t), \label{eq:struct_nonlin_sys_obs} 
	\end{align}
\end{subequations}
with matrices $ \bB \in \Rnm$, $\bC \in \Rpn$; the state, input, and output vectors are denoted by $ \bx(t) \in \Rn$, $\bu(t) \in \Rm$ and $\by(t) \in \Rp$, respectively; $\cL(\cdot)$ corresponds to a linear operator, while $\cP(\cdot): \R^{n+m} \rightarrow \Rn$ represents non-linear terms. Additionally, we assume the corresponding initial conditions for \eqref{eq:struct_nonlin_sys_dyn} to be zero.  We shall discuss its variants (e.g., the parametric version) in \Cref{sec:par_interpolation}.

We consider linear operators $\cL(\cdot)$ in the system \eqref{eq:struct_nonlin_sys}, covering a large class of systems arising in various science and engineering applications, e.g., classical linear systems, second-order systems, time-delay systems, and integro-differential systems. We list some examples in \Cref{tab:LinearStructureExamples}.  Furthermore, we assume that the non-linear function $\cP(\cdot)$ corresponds to polynomial non-linearities as follows: 
%
%
\begin{equation}\label{eq:PolynomialOpt}
	\cP(\bx(t), \bu(t)) = \sum_{\mathrm \xi = 2}^d \bH_{\xi}\kronF{\bx}{$\xi$}(t) + \sum_{\eta = 1}^{d-1} \bN_{\eta}\left(\bu(t)\otimes \bx^{\circled{\tiny {$\eta$}}}(t)\right),
\end{equation}
where $\bH_\xi \in \R^{n\times n^\xi}$, $\xi \in \{2,\ldots,d\},\bN_\eta \in \R^{n\times m\cdot n^\eta}$, $\eta \in \{1,\ldots,d-1\}$. In this work, the system~\eqref{eq:struct_nonlin_sys} is refereed to as \emph{polynomial structured system}. 

\begin{table}[tb]
	\centering
	\caption{Examples of common linear operators in dynamical systems.}
	\label{tab:LinearStructureExamples}
	{\footnotesize \renewcommand{\arraystretch}{2} 
		\begin{tabular}{|l|c|c|} \hline
			& \textbf{Linear operator $\cL\bx(t)$ } & \textbf{Frequency-domain description $\cK(s)\bX(s)$}\\ 
			\hline
			first-order  &  $ \bE\dot{\bx}- \bA\bx(t)   $ &  $\left(s\bE - \bA\right)\bX(s)$  
			\\ \hline
			second-order & $\bM\ddot{\bx}(t) + \bD\dot{\bx}(t)  + \bK\bx(t)$ & $\left(s^2\bM + s\bD  + \bK\right)\bX(s)$
			\\ \hline
			state delay & $\bE\dot{\bx}(t) - \bA\bx(t) - \bA_{\tau}\bx(t-\tau)$ & $\left(s\bE - \bA - \mathrm{e}^{-\tau s}\bA_{\tau}\right)\bX(s)$ 
			\\\hline
			fractional order &$\bE\dot{\bx}(t) - \displaystyle\frac{1}{\mathrm\Gamma(\alpha)}\displaystyle\int_0^t s^{\alpha-1}\bA x(t-s)ds  $     & $\left(s\bE - s^{-\alpha}\bA\right)\bX(s)$ \\ \hline
		\end{tabular}
	}
\end{table}

In this paper, our aim is to construct ROMs of order $r$ that have the same structure as in~\eqref{eq:struct_nonlin_sys}:
\begin{subequations}\label{eq:ROMstruct_nonlin_sys}
	\begin{align}
		\left(\chL \hat\bx\right)(t)	&= \chP(\hat\bx(t), \bu(t)) +\bhB \bu(t), \label{eq:ROMstruct_nonlin_sys_dyn} 
		\\ 
		\hy(t) 		  &= \bhC \hx(t), \label{eq:ROMstruct_nonlin_sys_obs} 
	\end{align}
\end{subequations}
where $ \bhB \in \R^{r \times m}$, $\bhC \in \R^{p \times r}$; the reduced state, input, and approximated output vectors are denoted by $ \hat\bx(t) \in \Rr$, $\bu(t) \in \Rm$ and $\hat\by(t) \in \Rp$, respectively, with $r\ll n$, and $\hat\by(t)$ approximates very well $\by(t)$ for all admissible inputs.   Additionally, the reduced linear operator $\chL$ has also the same structure as the linear operator $\cL$. 
As an example, for a second order dynamical system, the original linear operator is represented by
\begin{equation*}
	\cL(\bx) = \bM \ddot{\bx}(t) + \bD\dot{\bx}(t) + \bK\bx(t),
\end{equation*}
where  $\bM, \bD, \bK \in \R^{n \times n}$. Hence, in order to preserve the system structure, the reduced linear operator has to possess the following form: 
\begin{equation*}
	\chL(\hat\bx) = \hat\bM \ddot{\hat\bx}(t) + \hat\bD\dot{\hat\bx}(t) + \hat\bK\hat\bx(t),
\end{equation*}
with $ \hat\bM, , \hat\bD, \hat\bK \in \R^{r\times r}$.
Also, the reduced non-linear function $\chP$ has the same structure as the original non-linear term $\cP$, i.e.,
%
\begin{equation}\label{eq:ROMPolynomialOpt}
	\chP(\hat\bx(t)) = \sum_{\xi = 2}^d \bhH_{\xi}\kronF{\hat\bx}{$\xi$}(t) + \sum_{\eta = 1}^{d-1} \bhN_{\eta}\left(\bu(t)\otimes \hat\bx^{\circled{\tiny {$\eta$}}}(t)\right).
\end{equation}
We aim at achieving this goal via a Petrov-Galerkin projection. This means, we require two matrices $\bV, \bW \in \Rnr$ such that the reduced operator $\chL(\cdot)$ in \eqref{eq:ROMstruct_nonlin_sys} and matrices involved in determining the polynomial term in~\eqref{eq:ROMPolynomialOpt} can be given as follows:
%
%
%
\begin{equation}\label{eq:RedMatrices}
	\begin{aligned}
		\chL(\cdot) &= \bW^{\top} \cL(\cdot) \bV, \quad 	\bhH_{\xi} = \bW^{\top}\bH_{\xi}\bV^{\circled{\tiny{$\xi$}}},\quad \xi \in \{2,\ldots,d\}, \\    \bhN_{\eta} &= \bW^{\top}\bN_{\eta}\bV^{\circled{\tiny{$\eta$}}},\quad \eta\in \{1,\ldots,d-1\}.
	\end{aligned}
\end{equation}
Clearly,  the selection of the projection matrices $\bV$ and $\bW$ plays an important role in determining the desired ROMs. In this paper, we aim to determine these matrices such that the resulting ROM fulfills certain interpolation properties. We mention that for structured bilinear cases ($d = 1$), some interpolation-based results were developed in \cite{morBenGW20}, which we generalize to the polynomial case in the next section.

\subsection{Results on Tensor Algebra}\label{sec:tensoralg}
In this subsection, we recall some tensor algebra concepts that will be useful later in the paper and discuss symmetric tensors. A motivation for that is that the matrices $\bH_\xi $ and $\bN_\eta$ appearing in  \eqref{eq:PolynomialOpt} can be interpreted as unfoldings of  higher-order tensors. The process of representing a tensor as a matrix is often referred to as  \emph{matricization}, see, e.g., \cite{KolB09,kolda2006multilinear}. Typically, an $\mathrm N$th order tensor can be unfolded in $\mathrm N$ different ways, depending on the dimension  along which the tensor is unfolded. So, we begin by recalling the definition of matricization. 
\begin{definition}[e.g., \cite{kolda2006multilinear}]
	Consider an $\mathrm N$th order tensor $\bten{X} \in  \R^{n_1\times \cdots\times n_{\mathrm N}}$. The mode-$n$ matricization of the tensor $\bten{X}$, denoted by $\boldsymbol X_{(m)}$, is obtained by the following mapping:
	\begin{equation*}
		\boldsymbol{X}_{(m)}(i_m,j) = \bten{X}(i_1,\ldots,i_{\mathrm N}),
	\end{equation*}
	where $j = 1+ \sum\limits_{k=1,k\neq m}^{\mathrm N}(i_k-1)J_k$ with $J_k = \prod\limits_{z=1,z\neq m}^{k-1}n_z$ and $i_m \in \{1, \dots, n_m\}$.
\end{definition}

Next, we recall a connection between the mode-$n$ matricization and Kronecker products from, e.g., \cite{kolda2006multilinear}. For this, let the tensor-matrix product be:
\begin{equation*}
	\bten{Y} = \bten{X} \times_1 \mathrm \bA^{(1)} \times_2 \mathrm \bA^{(2)} \cdots \times_N\mathrm \bA^{(\mathrm N)},
\end{equation*}
where $\mathrm \bA^{(l)} \in \R^{J_l\times n_l }$, $\bten{Y}  \in \R^{J_1 \times \dots \times J_N}$, and $\times_i$ denotes the tensor contraction with respect to the $i$th dimension of the tensor. Then, the following relation between the unfolded tensors and Kronecker products holds (\cite[Prop. 3.7]{kolda2006multilinear}):
\begin{equation}\label{eq:matricizationRelationMatrix}
	\bY_{(m)} =\mathrm \bA^{(m)}\bX_{(m)}\left(\mathrm \bA^{(\mathrm N)}\otimes \cdots\otimes\mathrm \bA^{(m+1)} \otimes\mathrm \bA^{(m-1)} \otimes \mathrm \bA^{(1)} \right)^\top, \quad m\in \{1, \dots, \mathrm N\}.
\end{equation}
Now, we discuss a special case---that is, if $\bA^{(l)} = \ba_l^\top$, where $\ba_l$ is a column vector and $l \in \{1,\ldots,\mathrm N\}$. In this case, we observe that $\bY_{(m)}$ becomes a scalar,  given as:
\begin{equation}\label{eq:matricizationRelation}
	\boldsymbol{\bY}_{(m)} = \ba_m^\top\boldsymbol{\bX}_{(m)}\left( \mathrm \ba_\mathrm N\otimes\cdots\otimes \mathrm \ba_{m+1}\otimes \mathrm  \ba_{m-1}\otimes \cdots \otimes \mathrm \ba_1 \right), \quad m \in \{1,\ldots,\mathrm N\}.
\end{equation}
In what follows, we provide a result in tensor calculus  that is of particular interest for the setting considered in this paper. 

Before we proceed further, we define some notations. Let $\cS$ be a set $\{i_1,i_2,\ldots,i_n\}$, and  denote the set of all permutations of $\cS$ by $\cS_\bi$ and the number of elements in $\cS_\bi$ by $\alpha_\bi$. For example, consider a set $\{1,2,3\}$. Then, its permutations are: $(1,2,3), (2,1,3), (1,3,2), (2,3,1), (3,1,2)$ and $(3,2,1)$, and the number of elements are six. Having said this, in the following, we provide a result on the symmetric representation of the tensors.
\begin{lemma}\label{lemma:sym_tensor}
	Consider an $(\mathrm N+1)$st order tensor $\cH \in \R^{n\times \cdots \times n}$.  Let us consider a set $\cS \in \{i_1,\ldots,i_n\}$ and denote the set containing all its permutations by $\cS_\bi$ and the number of elements in $\cS_\bi$ by $\alpha_\bi$. Furthermore, let a tensor $\tilde \cH$ be defined such that the $\omega := \left(i_1+\sum_{l=2}^{\mathrm N} (i_l-1)(n^{l-1})\right)$th column of its mode-1 matricization (denoted by $\tilde \bH_{(1)}$) is given as follows:
	\begin{equation}
		\tilde \bH_{(1)}(:,\omega):=\bH_{(1)} \left(\sum_{(j_1,\ldots,j_n) \in \cS_\bi}\dfrac{1}{\alpha_\bi}\left( \bbe_{j_1} \otimes \bbe_{j_2} \otimes \cdots\otimes \bbe_{j_n} \right) \right) ,
	\end{equation}
	where $\bH_{(1)}$ is the mode-$1$ matricization of the tensor $\cH$.  
	Then, the following conditions are satisfied:
	\begin{enumerate}
		\item[a).] $\bH_{(1)}\left(\bx \otimes \cdots \otimes \bx\right) = \tilde\bH_{(1)}\left(\bx \otimes \cdots \otimes \bx\right)$, where  $\bx \in \Rn$ is a vector. 
		\item[b).] $\tilde\bH_{(1)}\left(\bq^{(1)}\otimes\cdots \otimes \bq^{(\mathrm N)}\right) = \tilde\bH_{(1)} \left(\tilde\bq^{(1)}\otimes\cdots\otimes \tilde\bq^{(\mathrm N)}\right)$, where $\bq^{(i)} \in \R$, $i \in \{1,\ldots,\mathrm N\}$, and $\left(\tilde\bq^{(1)},\ldots,\tilde\bq^{(\mathrm N)} \right)$ belongs to the set of all permutations of  $\left\{\bq^{(1)},\ldots, \bq^{(\mathrm N)}\right\}$.
		\item [c).] Moreover, all mode-m  matricizations of the tensor $\tilde\cH$  for $m \geq 2$  are the same, i.e., $\tilde\bH_{(2)}  =\tilde\bH_{(3)} = \cdots = \tilde\bH_{(\mathrm N)}$.
	\end{enumerate}
\end{lemma}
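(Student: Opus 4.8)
The plan is to route all three parts through a single combinatorial observation about symmetrizations of rank-one tensors. For a multi-index $\bi=(i_1,\ldots,i_{\mathrm N})$, write $\omega(\bi)=i_1+\sum_{l=2}^{\mathrm N}(i_l-1)n^{l-1}$ for the associated column index and set
\begin{equation*}
	\mathbf{s}_{\bi}\;:=\;\frac{1}{\alpha_{\bi}}\sum_{(j_1,\ldots,j_{\mathrm N})\in\cS_{\bi}}\bbe_{j_1}\otimes\cdots\otimes\bbe_{j_{\mathrm N}}\;\in\;\R^{n^{\mathrm N}},
\end{equation*}
so that by construction $\tilde\bH_{(1)}(:,\omega(\bi))=\bH_{(1)}\,\mathbf{s}_{\bi}$. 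I would first record two elementary facts. \emph{(i)} $\mathbf{s}_{\bi}$ is invariant under permuting the entries of $\bi$, since $\cS_{\bi}$ and $\alpha_{\bi}$ depend only on the underlying multiset of $\bi$. \emph{(ii)} For every $\bx\in\R^n$ one has $\sum_{\bi}\bx_{i_1}\cdots\bx_{i_{\mathrm N}}\,\mathbf{s}_{\bi}=\bx\otimes\cdots\otimes\bx$ ($\mathrm N$ factors), the sum running over all $\bi\in\{1,\ldots,n\}^{\mathrm N}$. To prove \emph{(ii)} I would group the multi-indices by their common multiset $M$: exactly $\alpha_M$ tuples have multiset $M$, each carrying the same scalar weight and the same vector $\mathbf{s}_M$, so $\alpha_M$ cancels the $1/\alpha_M$ and the left-hand side collapses to the plain expansion $\sum_{j_1,\ldots,j_{\mathrm N}}\bx_{j_1}\cdots\bx_{j_{\mathrm N}}\,\bbe_{j_1}\otimes\cdots\otimes\bbe_{j_{\mathrm N}}=\bx\otimes\cdots\otimes\bx$. (Equivalently, $\mathbf{s}_{\bi}$ is the full symmetrization $\frac{1}{\mathrm N!}\sum_{\sigma}\bbe_{i_{\sigma(1)}}\otimes\cdots\otimes\bbe_{i_{\sigma(\mathrm N)}}$ over all permutations $\sigma$ of $\{1,\ldots,\mathrm N\}$, and \emph{(ii)} is the standard statement that symmetrizing $\bx\otimes\cdots\otimes\bx$ over its factors leaves it unchanged.)

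Granting \emph{(i)}--\emph{(ii)}, part a) is one line: expanding $\bx\otimes\cdots\otimes\bx=\sum_{\bi}\bx_{i_1}\cdots\bx_{i_{\mathrm N}}\,\bbe_{i_1}\otimes\cdots\otimes\bbe_{i_{\mathrm N}}$ and applying $\tilde\bH_{(1)}$ column by column gives $\tilde\bH_{(1)}(\bx\otimes\cdots\otimes\bx)=\bH_{(1)}\sum_{\bi}\bx_{i_1}\cdots\bx_{i_{\mathrm N}}\,\mathbf{s}_{\bi}=\bH_{(1)}(\bx\otimes\cdots\otimes\bx)$ by \emph{(ii)}. For part b), I would use multilinearity of the Kronecker product to reduce to $\bq^{(l)}=\bbe_{i_l}$; then $\tilde\bH_{(1)}(\bbe_{i_1}\otimes\cdots\otimes\bbe_{i_{\mathrm N}})=\bH_{(1)}\mathbf{s}_{\bi}$, which by \emph{(i)} is unchanged when $(i_1,\ldots,i_{\mathrm N})$ is permuted; the general identity then follows by expanding each $\bq^{(l)}$ and each $\tilde\bq^{(l)}$ in the standard basis and re-indexing the resulting sum along the given permutation.

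For part c), the key remark is that \emph{(i)} is exactly the statement that $\tilde\cH$ is symmetric in its last $\mathrm N$ modes, i.e.\ the entry $\tilde\cH(i_0,i_1,\ldots,i_{\mathrm N})$ depends only on $i_0$ and on the multiset $\{i_1,\ldots,i_{\mathrm N}\}$. I would then compare two consecutive matricizations $\tilde\bH_{(m)}$ and $\tilde\bH_{(m+1)}$ for $m\in\{2,\ldots,\mathrm N\}$. Specializing the matricization definition to all mode sizes equal to $n$, one computes $J_k=n^{k-1}$ for $k<m$ and $J_k=n^{k-2}$ for $k>m$, so the base-$n$ digits of a column index $j$ recover, in order, the mode values at positions $1,\ldots,m-1,m+1,\ldots,\mathrm N+1$ for $\tilde\bH_{(m)}$, and at positions $1,\ldots,m,m+2,\ldots,\mathrm N+1$ for $\tilde\bH_{(m+1)}$. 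Hence, for a fixed pair $(a,j)$, the tensor entries selected by $\tilde\bH_{(m)}(a,j)$ and $\tilde\bH_{(m+1)}(a,j)$ agree in every mode except that the values sitting in modes $m$ and $m+1$ are interchanged; since both modes are $\geq 2$, symmetry of $\tilde\cH$ forces these entries to be equal, so $\tilde\bH_{(m)}=\tilde\bH_{(m+1)}$. Chaining this over $m=2,\ldots,\mathrm N$ gives $\tilde\bH_{(2)}=\tilde\bH_{(3)}=\cdots=\tilde\bH_{(\mathrm N+1)}$.

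I expect the only real obstacle to be the index bookkeeping in part c): evaluating the products $J_k=\prod_{z=1,\,z\neq m}^{k-1}n_z$ correctly and tracking which decoded digit ends up in which mode, so that the ``interchange modes $m$ and $m+1$'' reduction is rigorously justified. Parts a) and b) are essentially immediate once \emph{(ii)} is in place. (I would also read the statement with the obvious minor corrections: the running index in the definition of $\tilde\bH_{(1)}$ should be $(j_1,\ldots,j_{\mathrm N})$ with $\cS=\{i_1,\ldots,i_{\mathrm N}\}$, the vectors in b) are $\bq^{(l)}\in\R^n$, and the last matricization in c) is $\tilde\bH_{(\mathrm N+1)}$.)
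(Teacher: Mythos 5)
Your proposal is correct and follows essentially the same route as the paper's proof: your facts \emph{(i)}--\emph{(ii)} are precisely the symmetrization identities the paper establishes for parts a) and b), and you rightly flag the statement's typos (the index set should be $N$-tuples, $\bq^{(l)}\in\R^n$, and the chain in c) should end at $\tilde\bH_{(\mathrm N+1)}$). The only cosmetic difference is in part c), where the paper converts between matricizations via the Kronecker-product identity \eqref{eq:matricizationRelation} applied to basis vectors and then invokes part b), whereas you decode the column indices in base $n$ directly; both arguments reduce to the same observation that $\tilde\cH$ is symmetric in its last $\mathrm N$ modes.
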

\begin{proof}
	The proof is given in \Cref{proof:lemma21}. 
\end{proof}
For a better understanding, we illustrate \Cref{lemma:sym_tensor} by an example. Consider a tensor $\cH^{2\times 2\times 2\times 2}$ such that its mode-1 matricization (denoted by $\bH_{(1)}$) is given by
\begin{equation}
	\bH_{(1)} = \begin{bmatrix}a_1 ~~& a_2 ~~& a_3 ~~& a_4 ~~& a_5 ~~& a_6 ~~& a_7 ~~& a_8 \\b_1 ~~& b_2 ~~& b_3 ~~& b_4 ~~& b_5 ~~& b_6 ~~& b_7 ~~& b_8  \end{bmatrix}.
\end{equation}
Next, we write down explicitly the term $\bH_{(1)} (\bx\otimes \bx\otimes \bx)$, where $\bx = \bbm x_1 & x_2 \ebm^\top$-- that is
\begin{equation*}
	\bbm a_1 x_1^3 + a_2 x_1^2 x_2 + a_3 x_1x_2x_1 + a_4 x_1x_2^2 + a_5 x_2x_1^2 + a_6 x_2x_1x_2 + a_7x_2^2x_1 + a_8x_2^3 \\
	b_1 x_1^3 + b_2 x_1^2 x_2 + b_3 x_1x_2x_1 + b_4 x_1x_2^2 + b_5 x_2x_1^2 + b_6 x_2x_1x_2 + b_7x_2^2x_1 + b_8x_2^3 
	\ebm. 
\end{equation*}
Now, we define 
\begin{align*}
	\ta_2 &= \tfrac{(a_2 + a_3 + a_5)}{3}, & \ta_4 &= \tfrac{(a_4 + a_6 + a_7)}{3} &
	\tb_2 &= \tfrac{(b_2 + b_3 + b_5)}{3} &\tb_4 &= \tfrac{(b_4 + b_6 + b_7)}{3}
\end{align*}
Then, we can also write $\bH_{(1)} (\bx\otimes \bx\otimes \bx)$ as:
\begin{equation*}
	\bbm a_1 x_1^3 + \ta_2 x_1^2 x_2 + \ta_2 x_1x_2x_1 + \ta_4 x_1x_2^2 + \ta_2 x_2x_1^2 + \ta_4 x_2x_1x_2 + \ta_4x_2^2x_1 + a_8x_2^3 \\
	b_1 x_1^3 + \tb_2 x_1^2 x_2 + \tb_2 x_1x_2x_1 + \tb_4 x_1x_2^2 + \tb_4 x_2x_1^2 + \tb_4 x_2x_1x_2 + \tb_4x_2^2x_1 + b_8x_2^3 
	\ebm. 
\end{equation*}
Consequently, if we define a tensor $\tilde \cH$ such that its mode-1 matricization is given as
\begin{equation}
	\tilde\bH_{(1)} = \begin{bmatrix}a_1 ~~& \ta_2 ~~& \ta_2 ~~& \ta_4 ~~& \ta_2 ~~& \ta_4 ~~& \ta_4 ~~& a_8 \\ b_1 ~~& \tb_2 ~~& \tb_2 ~~& \tb_4 ~~& \tb_2 ~~& \tb_4 ~~& \tb_4 ~~& b_8  \end{bmatrix},
\end{equation}
then $\tilde \bH_{(1)}(\bx\otimes \bx\otimes \bx) = \bH_{(1)}(\bx\otimes \bx\otimes \bx)$. Moreover, it can also be observed that 
$\tilde \bH_{(1)} \left(\bu\otimes \bv\otimes \bw\right) = \tilde \bH_{(1)} \left(\bv\otimes \bu\otimes \bw\right)  = \cdots =\tilde \bH_{(1)} \left(\bw\otimes \bv\otimes \bu\right)$. Furthermore, if one aims at obtaining $\tilde\bH_{(1)}$ from \Cref{lemma:sym_tensor}, then we can write, e.g., its 2nd column as 
\begin{align*}
	\tilde\bH_{(1)}(:,2) &= \bH_{(1)} \left(\dfrac{1}{3} \left(\left(\bbe_2\otimes \bbe_1\otimes \bbe_1\right) + \left(\bbe_1\otimes \bbe_2\otimes \bbe_1\right) + \left(\bbe_1\otimes \bbe_1\otimes \bbe_2\right)  \right)\right)\\
	& =\dfrac{1}{3} \bbm a_2 + a_3 + a_5 \\ b_2 + b_3 + b_5 \ebm.
\end{align*}

To sum up, according to  \Cref{lemma:sym_tensor}, a tensor $\cH$ can be symmetrized (denote the symmetrized tensor by $\tilde \cH$) without changing the quantity $\bH_{(1)}(\bx\otimes \cdots\otimes \bx)$ and the following commutation rule is also fulfilled:
\begin{equation}
	\tilde \bH_{(1)} (\bv_1\otimes \cdots\otimes \bv_n) = 	\tilde \bH_{(1)} (\tilde \bv_1\otimes \cdots \otimes\tilde \bv_n),
\end{equation}
for every permutation $(\tilde \bv_1,\ldots, \tilde \bv_n)$ of the set $\{\bv_1,\ldots, \bv_n\}$. This extends the discussion in \cite{morBenB15} for $3$rd order tensors to the general case. Therefore, in the rest of the paper, without loss of generality, we assume that all tensors associated with $\bH_{\xi}$ and $\bN_{\eta}$ are symmetric.

\section{Volterra Series and Interpolation-Based MOR}\label{sec:nonpar_interpolation}
This section presents the Volterra series representation of nonlinear structured systems~\eqref{eq:struct_nonlin_sys}. For this, we extend  the discussion on non-structured polynomial systems in \cite{morBenG21} to the case of structured polynomial systems.  We aim at identifying the kernels related to the system~\eqref{eq:struct_nonlin_sys} that allow us to define generalized transfer functions. As a consequence, we intend to construct a ROM such that its generalized transfer functions interpolate these of the original system at pre-defined interpolation points. For simplicity, in this section, we assume  the system to be single-input single-output (SISO), i.e., $m = p =1$. We will also provide  an extension to the multi--input multi--output case~(MIMO).
\subsection{Volterra series representation}
First, let $\mphi$ be  the fundamental solution associated with the linear operator $\cL$. We detail the meaning of the fundamental solution in \Cref{appen:FunSol}. Consequently, the solution of an equation of the form:
\begin{equation}
	(\cL \bx)(t) = \bg(t)
\end{equation}
can be given as the convolution (using the fact that $\bx(0)=0$ by assumption)
\begin{equation}\label{eq:LinCovolution}
	\bx(t) = \int_0^{t} \mphi(\sigma)\bg(t-\sigma)d\sigma.
\end{equation}
Using this, we can write the solution of \eqref{eq:struct_nonlin_sys_dyn} as follows:
\begin{multline}\label{eq:system_state}
	\bx(t) = \int_0^t \mphi(\sigma_1) \bB \bu(t_{\sigma_1})d\sigma_1 + \sum_{\xi = 2}^d \int_0^t \mphi(\sigma_1) \bH_\xi \bx^{\circled{\tiny {$\xi$}}} (t_{\sigma_1}) d\sigma_1 \\ 
	+ \int_0^t \sum_{\eta = 1}^{d-1} \mphi(\sigma_1) \bN_{\eta} \left.\bx^{\circled{\tiny {$\eta$}}}(t_{\sigma_1})\right.  \bu(t_{\sigma_1}) d\sigma_1,
\end{multline}
where $t_{\sigma_1} := t-\sigma_1$. Moreover, we can determine the expression for $\bx(t_{\sigma_1})$ using the above equation -- that is,
\begin{multline}\label{eq:expressionxsigma1}
	\bx(t_{\sigma_1}) = \int_0^{t_{\sigma_1}} \mphi(\sigma_2) \bB \bu(t_{\sigma_{1}}{-}\sigma_2)d\sigma_2 + \sum_{\xi = 2}^d\int_0^{t_{\sigma_1}} \mphi(\sigma_2) \bH_\xi \bx^{\circled{\tiny {$\xi$}}}(t_{\sigma_1}{-}\sigma_2) d\sigma_2 \\
	+ \sum_{\eta = 1}^{d-1} \int_0^{t_{\sigma_1}} \mphi(\sigma_2) \bN_\eta\left. \bx^{\circled{\tiny {$\eta$}}}(t_{\sigma_1}{-}\sigma_2) \right. \bu(t_{\sigma_1}{-}\sigma_2) d\sigma_2.
\end{multline}
We utilize the expression for $\bx(t_{\sigma_1})$ in \eqref{eq:system_state}, multiplied  by the matrix $\bC$. Hence, we obtain
\begin{subequations}\allowdisplaybreaks
	\begin{align*}
		\by(t) &= \int_0^t \bC \mphi(\sigma_1) \bB \bu(t_{\sigma_1})d\sigma_1 
		\\
		&\qquad+ \sum_{\xi = 2}^d \int_0^t \underbrace{\int_0^{t_{\sigma_1}}{\cdots} \int_0^{t_{\sigma_1}}}_{\xi-\text{times}} \bC\mphi(\sigma_1)  \bH_\xi \left(\mphi(\sigma_2)\bB \otimes \cdots \otimes \mphi(\sigma_{\xi+1})\bB \right) 
		\\
		&\qquad \times \left(\bu(t_{\sigma_1}-\sigma_2) \cdots  \bu(t_{\sigma_1}-\sigma_{\xi+1}) \right) d\sigma_1 d\sigma_2\cdots d\sigma_{\xi+1}
		\\
		&\qquad + \sum_{\eta = 1}^{d-1} \int_0^t \underbrace{\int_0^{t_{\sigma_1}}\cdots \int_0^{t_{\sigma_1}}}_{\eta-\text{times}} \bC \mphi(\sigma_1) \bN_\eta\left(\mphi(\sigma_2)\bB \otimes \cdots \otimes \mphi(\sigma_{\eta+1})\bB \right)
		\\
		&\qquad \times \left(\bu(t_{\sigma_1})  \bu(t_{\sigma_1}-\sigma_2) \cdots  \bu(t_{\sigma_1}-\sigma_{\eta+1}) \right) d\sigma_1 d\sigma_2\cdots d\sigma_{\eta+1} + \cdots,
	\end{align*}
\end{subequations}
with the assumption that the above series convergences. As a result, we have an analytical representation of the output $\by(t)$ as an infinite sum of multivariable convolutions. But in the above expression for $\by(t)$, we only note down the leading few \emph{kernels} of the Volterra series for the system~\eqref{eq:struct_nonlin_sys}. In this paper, we focus only on those:
%
\begin{subequations}
	\begin{align}
		f_{\mathrm L}(\sigma_1) &:= \bC\mphi(\sigma_1) \bB,\\
		f^{(\xi)}_{\mathrm H}(\sigma_1,\ldots,\sigma_{\xi+1}) &:= \bC\mphi(\sigma_1)  \bH_\xi \left(\mphi(\sigma_2)\bB \otimes \cdots \otimes \mphi(\sigma_{\xi+1})\bB\right), \\
		f^{(\eta)}_{\mathrm N}(\sigma_1,\ldots,\sigma_{\eta+1}) &:= \bC \mphi(\sigma_1) \bN_\eta\left(\mphi(\sigma_2)\bB \otimes \cdots \otimes \mphi(\sigma_{\eta+1})\bB \right),
	\end{align}
\end{subequations}
with $\xi \in \{2,\ldots,d\}$ and $\eta \in \{1,\ldots,d-1\}$. Furthermore, by taking the multivariate Laplace transform (see, e.g., \cite{rugh1981nonlinear}), we get the frequency-domain representations of the kernels as follows:
\begin{subequations}\label{eq:general_TF}
	\begin{align}
		\bF_{\mathrm L}(s_1) &:=  \bC\cK^{-1}(s_1)\bB,\\
		\bF^{(\xi)}_{\mathrm H}(s_1,\ldots,s_{\xi+1}) & :=  \bC\cK^{-1}(s_{\xi+1}) \bH_\xi \left(\cK^{-1}(s_\xi)\bB \otimes \cdots \otimes \cK^{-1}(s_{1})\bB\right),\\
		\bF_{\mathrm N}^{(\eta)}(s_1,\ldots, s_{\eta+1}) &:=  \bC \cK^{-1}(s_{\eta+1}) \bN_\eta\left(\cK^{-1}(s_{\eta})\bB \otimes \cdots \otimes \cK^{-1}(s_{1})\bB \right),
	\end{align}
\end{subequations}
for $\xi \in \{2,\ldots,d\}$ and $\eta \in \{1,\ldots,d-1\}$, where $\cK^{-1}(s)$ is the Laplace transform of the fundamental solution $\mphi$. We have listed in \Cref{tab:LinearStructureExamples} some examples of the structure of $\cK(s)$ for certain types of structured systems. In this paper, we refer to the functions in \eqref{eq:general_TF}  as the \emph{multivariate transfer functions} associated with the polynomial structured system \eqref{eq:struct_nonlin_sys}.

\subsection{Interpolation-based MOR}
In this subsection, we present the construction of projection matrices $\bV$ and $\bW$, yielding ROMs \eqref{eq:ROMPolynomialOpt} such that the generalized transfer functions of the original model and ROM match at pre-defined interpolation points. The results presented here extend those presented in  \cite{morBenG21} from polynomial systems with $\cK(s) = s\bE-\bA$ and those derived in  \cite{Wer21} from quadratic-bilinear structured systems to the class of  structured polynomial systems. 


\begin{theorem}\label{thm:gen_interpolation}
	Let a SISO polynomial structured system be given as in \eqref{eq:struct_nonlin_sys}. Assume $\sigma_i$ and $\mu_i$, $i \in\{ 1,\ldots,\tilde r\}$, to be interpolation points such that $\cK(s)$ is invertible for all $s = \{\sigma_i,\mu_i\}$, $i \in \{1,\ldots,\tilde r\}$. Moreover, we define the projection matrices $\bV$ and $\bW$  as follows:
	\begin{subequations}\allowdisplaybreaks
	\begin{align*}
	\cV_{\mathrm L}	&=	\range{\cK^{-1}(\sigma_1)\bB,\ldots, \cK^{-1}(\sigma_{\tr})\bB},\\
	\cV_{\mathrm N} 	&=	\bigcup_{\eta= 1}^{d-1} \bigcup_{i= 1}^{\tr}  \range{ \cK^{-1}(\sigma_i) \bN_\eta\left(\cK^{-1}(\sigma_i)\bB \otimes \cdots \otimes \cK^{-1}(\sigma_i)\bB \right)},  \\
	\cV_{\mathrm H} 	&=	\bigcup_{\xi= 2}^d \bigcup_{i= 1}^{\tr}  \range{ \cK^{-1}(\sigma_i) \bH_\xi \left(\cK^{-1}(\sigma_i)\bB \otimes \cdots \otimes \cK^{-1}(\sigma_i)\bB \right) },\\ 
	\cW_{\mathrm L}	&=	\range{\cK^{-\top}(\mu_1)\bC^\top,\ldots, \cK^{-\top}(\mu_{\tr})\bC^\top},
	\\
	\cW_{\mathrm N} 	&=	\bigcup_{\eta= 1}^{d-1} \bigcup_{i= 1}^{\tr}  \textnormal{range}\big( \cK^{-\top}(\sigma_{i}) \left(\bN_\eta \right)_{(2)}\big(\cK^{-1}(\sigma_i)\bB  \otimes \cdots \otimes \cK^{-1}(\sigma_i)\bB \otimes\cK^{-\top}(\mu_i)\bC^\top\big)\big), 
	\\
	\cW_{\mathrm H} 	&=	\bigcup_{\xi= 2}^d  \bigcup_{i= 1}^{\tr}  \textnormal{range}\Big(\cK^{-\top}(\sigma_{i}) \left(\bH_\xi\right)_{(2)} \left(\cK^{-1}(\sigma_i)\bB \otimes \cdots \otimes \cK^{-\top}(\mu_i)\bC^\top \right)\Big) ,\\
	\range{\bV} 	& =	\cV_{\mathrm L}+ \cV_{\mathrm N}+ \cV_{\mathrm H},\\
	\range{\bW}	&= 	\cW_{\mathrm L}+ \cW_{\mathrm N}+\cW_{\mathrm H},
	\end{align*}
\end{subequations}
	where
	$\left(\bH_\xi\right)_{(2)} \in \R^{n\times n^\xi}$ and $\left(\bN_\eta\right)_{(2)} \in \R^{n\times m\cdot n^\xi}$ are, respectively, the mode-2 matricizations of the  $(\xi{+}1)$-way symmetric tensor $\bten{H}_\xi \in \R^{n\times \cdots \times n}$ and $(\eta{+}2)$-way symmetric  tensor $\bten{N}_\eta \in \R^{n\times \cdots \times n}$ whose mode-1 matricizations are $\bH_\xi$  and $\bN_\eta$, respectively.  Assume $\bV$ and $\bW$ are of full column rank and  $(\bF_{\mathrm L},\bF_{\mathrm N}, \bF_{\mathrm H})$ and $(\hat\bF_{\mathrm L},\hat\bF_{\mathrm N}, \hat\bF_{\mathrm H})$ are the multivariate transfer functions of the original model and ROM, respectively.  If the ROM is constructed by Petrov-Galerkin projections as in \eqref{eq:RedMatrices} using the matrices $\bV$ and $\bW$, then the ROM satisfies the following interpolation conditions:
	\begin{subequations}
		\begin{align}
			\bF_{\mathrm L}(\sigma_i)					&= 	\hat{\bF}_{\mathrm L}(\sigma_i), \label{eq:linear1} \\
			\bF_{\mathrm L}(\mu_i)						&= 	\hat{\bF}_{\mathrm L}(\mu_i), \label{eq:linear2}\\	
			\bF_{\mathrm N}^{(\eta)}(\sigma_i,\ldots,\sigma_i) 	&= 	\hat \bF_{\mathrm N}^{(\eta)}(\sigma_i,\ldots,\sigma_i),\label{eq:bilinear1} \\
			\bF_{\mathrm N}^{(\eta)}(\sigma_i,\ldots,\sigma_i,\mu_i) 	&=	\hat \bF_{\mathrm N}^{(\eta)}(\sigma_i,\ldots,\sigma_i,\mu_i), \label{eq:bilinear2}\\
			\bF_{\mathrm H}^{(\xi)}(\sigma_i,\ldots,\sigma_i) 	&= 	\hat \bF_H^{(\xi)}(\sigma_i,\ldots,\sigma_i), \label{eq:Qbilinear1}\\
			\bF_{\mathrm H}^{(\xi)}(\sigma_i,\ldots,\sigma_i,\mu_i) 	&= 	\hat \bF_{\mathrm H}^{(\xi)}(\sigma_i,\ldots,\sigma_i,\mu_i)\label{eq:Qbilinear2},
		\end{align}
	\end{subequations}
	provided the reduced matrix $ \chK(s) := \bW^\top\cK(s)\bV$ is non-singular for $s = \{\sigma_i,\mu_i\}$, $i \in \{1,\ldots,\tilde r\}$.
\end{theorem}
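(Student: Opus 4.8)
The plan is to reduce the whole statement to one elementary \emph{projection identity} applied repeatedly, peeling each nested Kronecker expression in the generalized transfer functions from the inside out. Since $\bV$ has full column rank and, by hypothesis, $\hat\cK(s_0):=\bW^\top\cK(s_0)\bV$ is nonsingular for $s_0\in\{\sigma_i,\mu_i\}$, I would first establish that for any vector $\bq$ with $\cK^{-1}(s_0)\bq\in\range{\bV}$ one has $\bV\,\hat\cK^{-1}(s_0)\,\bW^\top\bq=\cK^{-1}(s_0)\bq$, and, dually, that $\cK^{-\top}(s_0)\bC^\top\in\range{\bW}$ implies $\bC\bV\,\hat\cK^{-1}(s_0)\,\bW^\top=\bC\cK^{-1}(s_0)$; each follows in one line by writing the relevant resolvent image as $\bV\bz$ (respectively $\bW\bz$) and substituting $\bW^\top\bq=\hat\cK(s_0)\bz$. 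Because the reduced operator and the reduced polynomial coefficients are obtained by Petrov--Galerkin projection exactly as in~\eqref{eq:RedMatrices}, the reduced resolvent is $\hat\cK(s)=\bW^\top\cK(s)\bV$, which inherits the structure of $\cK(s)$ entrywise (e.g.\ $s^2\hat\bM+s\hat\bD+\hat\bK$ in the second-order case); hence $\hat\cK^{-1}(s)$ is the Laplace transform of the fundamental solution of $\chL$, and the ROM's $\hat\bF$'s are genuine multivariate transfer functions of a structured system of the same type.

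Next I would invoke the Kronecker mixed-product rule $\bV^{\circled{\tiny{$\xi$}}}(\bz_1\otimes\cdots\otimes\bz_\xi)=(\bV\bz_1)\otimes\cdots\otimes(\bV\bz_\xi)$, so that, with $\hat\bH_\xi=\bW^\top\bH_\xi\bV^{\circled{\tiny{$\xi$}}}$ (and likewise for $\hat\bN_\eta$), applying $\hat\bH_\xi$ to a Kronecker product of reduced vectors equals $\bW^\top$ times $\bH_\xi$ applied to the Kronecker product of the $\bV$-lifted vectors. Then each claim collapses mechanically. For~\eqref{eq:linear1}--\eqref{eq:linear2}, apply the projection identity with $\bq=\bB$ at $\sigma_i$ (using $\cK^{-1}(\sigma_i)\bB\in\cV_{\mathrm L}$) and with $\bC^\top$ at $\mu_i$ (using $\cK^{-\top}(\mu_i)\bC^\top\in\cW_{\mathrm L}$). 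For~\eqref{eq:bilinear1} and~\eqref{eq:Qbilinear1}, where all frequencies equal $\sigma_i$: the innermost factors $\hat\cK^{-1}(\sigma_i)\hat\bB$, once multiplied by $\bV$, become $\cK^{-1}(\sigma_i)\bB$ (by $\cV_{\mathrm L}$), so the reduced nonlinear term equals $\bW^\top\bN_\eta\bigl((\cK^{-1}(\sigma_i)\bB)^{\circled{\tiny{$\eta$}}}\bigr)$ (respectively with $\bH_\xi$), and a second application of the identity to this vector is legitimate precisely because $\cK^{-1}(\sigma_i)\bN_\eta\bigl((\cK^{-1}(\sigma_i)\bB)^{\circled{\tiny{$\eta$}}}\bigr)\in\cV_{\mathrm N}$ (respectively $\cK^{-1}(\sigma_i)\bH_\xi\bigl((\cK^{-1}(\sigma_i)\bB)^{\circled{\tiny{$\xi$}}}\bigr)\in\cV_{\mathrm H}$); the outer factor $\hat\bC=\bC\bV$ then closes the identity. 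For~\eqref{eq:bilinear2} and~\eqref{eq:Qbilinear2}, where the input frequencies are $\sigma_i$ and the output frequency is $\mu_i$: the inner factors again collapse through $\cV_{\mathrm L}$, and the remaining quantity $\hat\bC\,\hat\cK^{-1}(\mu_i)\,\bW^\top\bigl(\bH_\xi(\cdots)\bigr)$ reduces to $\bC\cK^{-1}(\mu_i)\bH_\xi(\cdots)$ via the dual half of the identity, using $\cK^{-\top}(\mu_i)\bC^\top\in\cW_{\mathrm L}$.

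Finally I would handle the symmetric-tensor bookkeeping. By the reduction at the end of \Cref{sec:tensoralg} I may take $\bten{H}_\xi$ and $\bten{N}_\eta$ symmetric, which makes each transfer function symmetric in its input frequencies and, by \Cref{lemma:sym_tensor}(c), makes the mode-$2$ matricizations $\left(\bH_\xi\right)_{(2)}$ and $\left(\bN_\eta\right)_{(2)}$ equal to all higher matricizations; this pins down the correct ``transposed'' contraction appearing in $\cW_{\mathrm N}$ and $\cW_{\mathrm H}$. The six value conditions~\eqref{eq:linear1}--\eqref{eq:Qbilinear2} use only $\cV_{\mathrm L},\cV_{\mathrm N},\cV_{\mathrm H}$ and $\cW_{\mathrm L}$; the additional left directions $\cW_{\mathrm N},\cW_{\mathrm H}$, combined with the $\cV$-directions at $\sigma_i$, further enforce the associated mixed (Hermite-type) conditions, in direct analogy with the linear case, and I would include that verification for completeness.

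I expect the main obstacle to be bookkeeping rather than anything conceptual: keeping straight, for the $\bN$- and $\bH$-kernels separately and for each of the six conditions, which of $\cV_{\mathrm L},\cV_{\mathrm N},\cV_{\mathrm H},\cW_{\mathrm L}$ certifies which collapsing step, and ensuring the mixed-product rewriting of $\hat\bH_\xi$ and $\hat\bN_\eta$ is applied only after every Kronecker factor has been lifted back to $\R^n$ by $\bV$. The MIMO extension additionally requires carrying the $\bI_m$ blocks through the Kronecker algebra, but the argument is otherwise unchanged.
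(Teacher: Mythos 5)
Your proposal is correct and follows essentially the same route as the paper's proof: the one-line projection identity $\bV\hat{\cK}^{-1}(s_0)\bW^\top\bq=\cK^{-1}(s_0)\bq$ for $\cK^{-1}(s_0)\bq\in\range{\bV}$ (and its dual for $\range{\bW}$), combined with the Kronecker mixed-product rule to peel the nested resolvents from the inside out, is precisely the mechanism the paper displays for condition~\eqref{eq:bilinear1} before declaring the remaining cases analogous. Your observation that the six value conditions need only $\cV_{\mathrm L},\cV_{\mathrm N},\cV_{\mathrm H},\cW_{\mathrm L}$, with $\cW_{\mathrm N},\cW_{\mathrm H}$ entering only for the Hermite conditions of \Cref{theo:Hermite_intep}, is accurate and consistent with the paper.
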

\begin{proof} 
	The theorem can be proven along the same lines as done in \cite{morBenG21}, where a special case with $\cK(s) = s\bE-\bA$ is considered.  However, we here provide the idea by providing the proof of the relation~\eqref{eq:bilinear1} only. 
	
	Firstly notice that, from \eqref{eq:RedMatrices}, we have $\chK(s) = \bW^\top\cK(s)\bV.$	The interpolation conditions in \eqref{eq:linear1} and \eqref{eq:linear2} follow directly from the structured linear case, see~\cite{morBeaG09}. Additionally,  one can easily prove the following relations:
	\begin{subequations}\allowdisplaybreaks\label{eq:linearRel}
		\begin{align}
			\bV\chK^{-1}(\sigma_i)\bhB 	&= \cK^{-1}(\sigma_i)\bB,  &\sigma_i &\in \{\sigma_1,\ldots,\sigma_{\tilde r}\},\label{eq:linearRel_V}\\
			\bhC\chK^{-1}(\mu_i)\bW^\top &= \bC\cK^{-1}(\mu_i),  &\mu_i &\in \{\mu_1,\ldots,\mu_{\tilde r}\}.\label{eq:linearRel_W}
		\end{align}
	\end{subequations}
	Next, let us prove the interpolation condition in \eqref{eq:bilinear1}. We begin with
	\begin{subequations}\allowdisplaybreaks
		\begin{align}
			&\bV\chK^{-1}(\sigma_i)\bhN_\eta\left(\chK^{-1}(\sigma_i)\bhB \otimes \cdots \otimes \chK^{-1}(\sigma_i)\bhB \right) \nonumber
			\\
			&\qquad\qquad = \bV\chK^{-1}(\sigma_i)\bW^\top\bN_\eta \bV^{\circled{\tiny{$\eta$}}}\left(\chK^{-1}(\sigma_i)\bhB \otimes \cdots \otimes \chK^{-1}(\sigma_i)\bhB \right)\nonumber
			\\
			&\qquad\qquad = \bV\chK^{-1}(\sigma_i)\bW^\top\bN_\eta \left( \bV \chK^{-1}(\sigma_i)\bhB \otimes \cdots \otimes \bV\chK^{-1}(\sigma_i)\bhB \right)\nonumber
			\\
			&\qquad\qquad = \bV\chK^{-1}(\sigma_i)\bW^\top\bN_\eta \left(  \cK^{-1}(\sigma_i)\bB \otimes \cdots \otimes \cK^{-1}(\sigma_i)\bB \right)\nonumber
			\\
			&\pushright{\left(\text{using}~ \ref{eq:linearRel_V}\right)}\nonumber
			\\
			&\qquad\qquad = \bV\chK^{-1}(\sigma_i)\bW^\top \cK(\sigma_i) \underbrace{\cK^{-1}(\sigma_i) \bN_\eta \left(  \cK^{-1}(\sigma_i)\bB \otimes \cdots \otimes \cK^{-1}(\sigma_i)\bB \right)}_{\in\range{\bV}\left(\therefore ~=: \bV\bz\right)}\nonumber
			\\
			&\qquad\qquad = \bV\chK^{-1}(\sigma_i)\bW^\top \cK(\sigma_i) \bV\bz, \nonumber
			\\
			& \qquad\qquad =\bV\chK^{-1}(\sigma_i) \chK(\sigma_i)\bz = \bV\bz, \label{eq:VPhi}
		\end{align}
	\end{subequations}
	where $\bz$ is a vector such that  $\bV\bz = \cK^{-1}(\sigma_i)\bB \otimes \cdots \otimes \cK^{-1}(\sigma_i)\bB$. Hence, pre-multiplying with  $\bC$ yields the interpolation condition \eqref{eq:bilinear1}. The proof of the interpolation condition \eqref{eq:Qbilinear1} follows in a similar way. Moreover, the other interpolation conditions can be proven similarly.  \qed
\end{proof}

\Cref{thm:gen_interpolation} allows to construct a ROM satisfying interpolation conditions of the multivariate system transfer functions \eqref{eq:general_TF}. 
In what follows, we show that if the right and left interpolation points are equal, i.e., $\sigma_i = \mu_i, i \in \{1,\ldots,\tilde{r}\}$, then Hermite interpolation conditions are also satisfied.

\begin{theorem}\label{theo:Hermite_intep} Under the hypothesis of \Cref{thm:gen_interpolation}, assume that $\sigma_i = \mu_i$, for $i \in\{ 1,\ldots,\tilde r\}$. Then,
	\begin{subequations}
		\begin{align}
			\dfrac{\partial}{\partial s_1} \bF_{\mathrm L}(\sigma_i)					&= 	\dfrac{\partial}{\partial s_1} \hat{\bF}_{\mathrm L}(\sigma_i), \label{eq:devlinear} 
			\\
			\dfrac{\partial}{\partial s_j} \bF_{\mathrm N}^{(\eta)}(\sigma_i,\ldots,\sigma_i) 	
			&= 	\dfrac{\partial}{\partial s_j} \hat \bF_{\mathrm  N}^{(\eta)}(\sigma_i,\ldots,\sigma_i),\quad \text{for}~\, j \in\{1, \dots, \eta+1 \}, \label{eq:devbilinear} 
			\\
			\dfrac{\partial}{\partial s_k} \bF_{\mathrm H}^{(\xi)}(\sigma_i,\ldots,\sigma_i) 	&= 	\dfrac{\partial}{\partial s_k} \hat \bF_{\mathrm H}^{(\xi)}(\sigma_i,\ldots,\sigma_i), \quad \text{for}~\, k \in\{1, \dots, \xi+1 \}. \label{eq:devQbilinear}
		\end{align}
	\end{subequations}
\end{theorem}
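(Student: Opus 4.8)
The plan is to differentiate each multivariate transfer function in \eqref{eq:general_TF} with respect to every frequency argument and, using the symmetry of the tensors $\bten{H}_\xi$ and $\bten{N}_\eta$ guaranteed by \Cref{lemma:sym_tensor}, to reduce the claimed identities to two representative cases: differentiation with respect to the ``output'' frequency, and with respect to one ``input'' frequency. Two facts are used throughout. The first is the resolvent derivative identity
\begin{equation*}
	\frac{\partial}{\partial s}\,\cK^{-1}(s) = -\,\cK^{-1}(s)\,\cK'(s)\,\cK^{-1}(s),\qquad \cK'(s):=\frac{\mathrm{d}}{\mathrm{d}s}\cK(s),
\end{equation*}
together with its reduced analogue and the elementary relation $\chK'(s)=\bW^\top\cK'(s)\bV$. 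The second is the family of primitive interpolation relations already produced inside the proof of \Cref{thm:gen_interpolation}: since $\sigma_i=\mu_i$, relations \eqref{eq:linearRel_V}--\eqref{eq:linearRel_W} give $\bV\chK^{-1}(\sigma_i)\bhB=\cK^{-1}(\sigma_i)\bB$ and $\bhC\chK^{-1}(\sigma_i)\bW^\top=\bC\cK^{-1}(\sigma_i)$ (equivalently, transposed, $\bW\chK^{-\top}(\sigma_i)\bhC^\top=\cK^{-\top}(\sigma_i)\bC^\top$), while the chain \eqref{eq:VPhi} shows that the nonlinear primitive $g_i:=\cK^{-1}(\sigma_i)\bH_\xi\bigl(\cK^{-1}(\sigma_i)\bB\otimes\cdots\otimes\cK^{-1}(\sigma_i)\bB\bigr)$ lies in $\cV_{\mathrm H}\subseteq\range{\bV}$ with reduced preimage $\hat g_i:=\chK^{-1}(\sigma_i)\bhH_\xi(\cdots)$ obeying $\bV\hat g_i=g_i$; dually, the construction of $\cW_{\mathrm H}$ with $\mu_i=\sigma_i$ places the mode-$2$ primitive $w_i:=\cK^{-\top}(\sigma_i)\,(\bH_\xi)_{(2)}\bigl(\cK^{-1}(\sigma_i)\bB\otimes\cdots\otimes\cK^{-1}(\sigma_i)\bB\otimes\cK^{-\top}(\sigma_i)\bC^\top\bigr)$ in $\cW_{\mathrm H}\subseteq\range{\bW}$, again with a matching reduced preimage. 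The reduced tensors inherit the partial symmetry of $\bten{H}_\xi$, $\bten{N}_\eta$ because the Galerkin projections in \eqref{eq:RedMatrices} apply the same factor $\bV^\top$ to every ``input'' mode.

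For the linear kernel \eqref{eq:devlinear} a one-line computation suffices: differentiating $\hat\bF_{\mathrm L}(s)=\bhC\chK^{-1}(s)\bhB$ and evaluating at $s=\sigma_i$ gives $-\bigl(\bhC\chK^{-1}(\sigma_i)\bW^\top\bigr)\cK'(\sigma_i)\bigl(\bV\chK^{-1}(\sigma_i)\bhB\bigr)=-\bC\cK^{-1}(\sigma_i)\cK'(\sigma_i)\cK^{-1}(\sigma_i)\bB=\bF_{\mathrm L}'(\sigma_i)$, using $\chK'=\bW^\top\cK'\bV$ and the two primitive relations; this is the classical bitangential Hermite condition, cf.\ \cite{morBeaG09}.

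For the polynomial kernels, \Cref{lemma:sym_tensor}(b) makes $\bF_{\mathrm H}^{(\xi)}$ and $\hat\bF_{\mathrm H}^{(\xi)}$ symmetric in the input frequencies $s_1,\dots,s_\xi$, so \eqref{eq:devQbilinear} only has to be checked for $k=\xi+1$ and for $k=1$. For $k=\xi+1$ only the outer factor $\bC\cK^{-1}(s_{\xi+1})$ is differentiated, and evaluation at the diagonal point produces $-\bC\cK^{-1}(\sigma_i)\cK'(\sigma_i)g_i$ on the full-order side and $-\bhC\chK^{-1}(\sigma_i)\chK'(\sigma_i)\hat g_i$ on the reduced side; these agree by exactly the computation of the linear case, now with $\hat g_i$ in the role of $\chK^{-1}(\sigma_i)\bhB$ and using $\bV\hat g_i=g_i$. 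For $k=1$ the differentiated Kronecker slot becomes $\cK^{-1}(\sigma_i)\cK'(\sigma_i)\cK^{-1}(\sigma_i)\bB$, sitting in the mode-$2$ position of a contraction in which all modes are paired with vectors; applying the matricization identity \eqref{eq:matricizationRelation} (and its matrix form \eqref{eq:matricizationRelationMatrix}) with $m=2$ rewrites the full-order derivative as the ``sandwich'' $-\bigl(\cK^{-1}(\sigma_i)\bB\bigr)^\top\cK'(\sigma_i)^\top w_i$, and the same rewriting on the reduced side gives $-\bigl(\chK^{-1}(\sigma_i)\bhB\bigr)^\top\chK'(\sigma_i)^\top\hat w_i$ with $\bW\hat w_i=w_i$; since $\bV\chK^{-1}(\sigma_i)\bhB=\cK^{-1}(\sigma_i)\bB$ and $\chK'(\sigma_i)=\bW^\top\cK'(\sigma_i)\bV$, the two scalars coincide. (The relation $\bW\chK^{-\top}(\sigma_i)\bhC^\top=\cK^{-\top}(\sigma_i)\bC^\top$ is what makes $\hat w_i$ a genuine preimage of $w_i$, and this is where $\sigma_i=\mu_i$ is essential, as it also is in the $k=\xi+1$ case.) The argument for $\bF_{\mathrm N}^{(\eta)}$ is verbatim the same with $\bN_\eta,\bten{N}_\eta$ in place of $\bH_\xi,\bten{H}_\xi$ and the order lowered by one, which yields \eqref{eq:devbilinear}.

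The part I expect to require the most care is the $k=1$ case: one has to move the differentiated Kronecker factor into precisely the mode-$2$-matricization form that appears in the definition of $\cW_{\mathrm H}$ (resp.\ $\cW_{\mathrm N}$), keeping track of the Kronecker-factor ordering prescribed by \eqref{eq:matricizationRelation}, and one has to confirm that the reduced tensors carry enough symmetry for the same rewriting to be available on the ROM side. Everything else then reduces to a mechanical combination of the resolvent derivative identity with the interpolation relations established for \Cref{thm:gen_interpolation}.
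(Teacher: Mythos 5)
Your proposal is correct and follows essentially the same route as the paper: the resolvent derivative identity, the primitive interpolation relations $\bV\chK^{-1}(\sigma_i)\bhB=\cK^{-1}(\sigma_i)\bB$ and $\bW\chK^{-\top}(\sigma_i)\bhC^\top=\cK^{-\top}(\sigma_i)\bC^\top$ from the proof of \Cref{thm:gen_interpolation}, the passage to the mode-$2$ matricization so that the differentiated input slot pairs with a vector lying in $\cW_{\mathrm N}$ (resp.\ $\cW_{\mathrm H}$), and \Cref{lemma:sym_tensor} to cover the remaining indices $j$. The only cosmetic difference is that you verify the linear Hermite condition by direct computation and organize the argument into explicit output-frequency and input-frequency cases, whereas the paper cites \cite{morBeaG09} for the former and writes out the $s_1$-derivative chain in full.
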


\begin{proof}
	First, note that the result \eqref{eq:devlinear} is well-known in the literature, see, e.g.,  \cite[Thm. 1]{morBeaG09}.  Thus, we focus here on  proving the interpolation condition \eqref{eq:devbilinear}.  Observe that 
	\[\dfrac{\partial}{\partial s} \cK^{-1}(s) = -\cK^{-1}(s)\left(\dfrac{\partial}{\partial s}\cK(s)\right) \cK^{-1}(s).\]
	Hence,
	\begin{subequations}\allowdisplaybreaks
		\begin{align*}
			\dfrac{\partial}{\partial s_1}& \hat \bF_{\mathrm N}^{(\eta)}(\sigma_i,\ldots,\sigma_i) 
			\\
			&=  \bhC \chK^{-1}(\sigma_i) \bhN_\eta\left(\chK^{-1}(\sigma_i)\bhB \otimes \cdots \otimes \dfrac{\partial}{\partial s_1}\chK^{-1}(\sigma_i)\bhB \right) 
			\\
			&= - \bhC \chK^{-1}(\sigma_i) \bhN_\eta\left(\chK^{-1}(\sigma_i)\bhB \otimes \cdots \otimes \chK^{-1}(\sigma_i)
			\left(\dfrac{\partial}{\partial s_1}\chK(\sigma_i)\right) \chK^{-1}(\sigma_i)\bhB \right) 
			\\
			&\pushright{\left(\text{using}~ \eqref{eq:matricizationRelation}\right)}
			\\
			&= - 
			\bhB^\top \chK^{-\top}(\sigma_i)\left(\dfrac{\partial}{\partial s_1}\chK(\sigma_i)\right)^\top \chK^{-\top}(\sigma_i)
			\left(\bhN_\eta\right)_{(2)}\left(\chK^{-1}(\sigma_i)\bhB \otimes \cdots \otimes  \chK^{-T}(\sigma_i)\bhC^\top \right) 
			\\ 
			&= -  \bhB^\top \chK^{-\top}(\sigma_i)\left(\dfrac{\partial}{\partial s_1}\chK(\sigma_i)\right)^\top \chK^{-\top}(\sigma_i) \bV^\top\left(\bN_\eta\right)_{(2)}\left(\bV\otimes \cdots\otimes \bV \otimes \bW\right)  \\
			& \pushright{\times \left(\chK^{-1}(\sigma_i)\bhB \otimes \cdots \otimes  \chK^{-\top}(\sigma_i)\bhC^\top \right) }
			\\ 
			&= -  \bhB^\top \chK^{-\top}(\sigma_i)\left(\dfrac{\partial}{\partial s_1}\chK(\sigma_i)\right)^\top \chK^{-\top}(\sigma_i) \bV^\top\left(\bN_\eta\right)_{(2)}\left(\cK^{-1}(\sigma_i)\bB \otimes \cdots \otimes  \cK^{-\top}(\sigma_i)\bC^\top \right) 
			\\
			&= -  \bhB^\top \chK^{-\top}(\sigma_i)\left(\dfrac{\partial}{\partial s_1}\chK(\sigma_i)\right)^\top \chK^{-\top}(\sigma_i) \bV^\top \cK(\sigma_i)^\top
			\\
			&\pushright{ \times \underbrace{\cK(\sigma_i)^{-\top}\left(\bN_\eta\right)_{(2)}\left(\cK^{-1}(\sigma_i)\bB \otimes \cdots \otimes  \cK^{-\top}(\sigma_i)\bC^\top \right)}_{\in\range{\bW}\left(\therefore ~=: \bW\bz\right)} }
			\\
			&= -  \bhB^\top \chK^{-\top}(\sigma_i)\left(\dfrac{\partial}{\partial s_1}\chK(\sigma_i)\right)^\top \chK^{-\top}(\sigma_i) \bV\cK(\sigma_i)^\top \bW z 
			\\ 
			&= -  \bhB^\top \chK^{-\top}(\sigma_i)\left(\dfrac{\partial}{\partial s_1}\chK(\sigma_i)\right)^\top  z 
			\\
			&= -  \bhB^\top \chK^{-\top}(\sigma_i) \bV^\top\left(\dfrac{\partial}{\partial s_1}\cK(\sigma_i)\right)^\top\bW  z 
			\\
			&= -  \bB^\top \cK^{-\top}(\sigma_i)\left(\dfrac{\partial}{\partial s_1}\cK(\sigma_i)\right)^\top\bW  z 
			\\
			&= -  \bB^\top \cK^{-\top}(\sigma_i)\left(\dfrac{\partial}{\partial s_1}\cK(\sigma_i)\right)^\top \cK(\sigma_i)^{-\top}\left(\bN_\eta\right)_{(2)}\left(\cK^{-1}(\sigma_i)\bB \otimes \cdots \otimes  \cK^{-\top}(\sigma_i)\bC^\top \right)
			\\
			& = \bC \cK^{-1}(\sigma_i) \bN_\eta\left(\cK^{-1}(\sigma_i)\bB \otimes \cdots \otimes \dfrac{\partial}{\partial s_1}\cK^{-1}(\sigma_i)\bB \right) 
			\\
			& = 	\dfrac{\partial}{\partial s_1}\bF_{\mathrm N}^{(\eta)}(\sigma_i,\ldots,\sigma_i).
		\end{align*}	
	\end{subequations}
	Furthermore, due to the symmetric tensors and \Cref{lemma:sym_tensor}, we have\[ \dfrac{\partial}{\partial s_1} \hat \bF_{\mathrm N}^{(\eta)}(\sigma_i,\ldots,\sigma_i) = \dfrac{\partial}{\partial s_j} \hat \bF_{\mathrm N}^{(\eta)}(\sigma_i,\ldots,\sigma_i),\quad j \in \{2,\ldots,\eta\}. \] Moreover, along the same lines as above, we can also show that 
	\begin{equation}
		\dfrac{d}{ds_{\eta+1}} \bF_{\mathrm N}^{(\eta)}(\sigma_i,\ldots,\sigma_i) = \dfrac{d}{ds_{\eta+1}} \hat\bF_{\mathrm N}^{(\eta)}(\sigma_i,\ldots,\sigma_i).
	\end{equation}
	Hence, the relations \eqref{eq:devbilinear} are proven, and similarly, the relations in \eqref{eq:devQbilinear} can also be proven.
\end{proof}

From  \Cref{thm:gen_interpolation,theo:Hermite_intep},  we are able to determine Petrov-Galerkin matrices $\bV$ and $\bW$, allowing to construct a desired interpolatory ROM. These results are derived up to now for the SISO case. The results generalize \cite[Thms. 5 and 6]{morBenGW20}.
Moreover, they can be easily generalized for the MIMO case using the idea of the so-called tangential interpolation. We discuss the MIMO result in \Cref{appen:TangInterp}.

\section{Extension to Parametric  and Special Cases}\label{sec:par_interpolation}
In this section, we discuss extensions of the result presented in the previous section to parametric and some special cases. We begin with the parametric case.
\subsection{Parametric case}
Here, we consider parametric systems of the form:
\begin{subequations}\label{eq:struct_nonlin_sys_para}
	\begin{align}
		\left(\cL(\bp) \bx\right)(t,\bp)	&= \cP( \bx(t,\bp), \bu(t),\bp) +\bB(\bp) \bu(t), \label{eq:struct_nonlin_sys_dyn_para} 
		\\ 
		\by(t,\bp) 		  &= \bC(\bp) \bx(t,\bp), \label{eq:struct_nonlin_sys_obs_para} 
	\end{align}
\end{subequations}
where $\bp \in \mathrm\Omega \subset \R^q$ contains the system parameters; $\cL(\bp)$ is a parameterized linear operator; $\bB(\bp)\in \Rnm, \bC(\bp) \in \Rqn$ are parameter-dependent matrices, and the nonlinear term $\cP( \bx(t,\bp), \bu(t),\bp)$ takes the form:
\begin{equation}\label{eq:PolynomialOptPar}
	\cP(\bx(t,\bp), \bu(t),\bp) = \sum_{\xi = 2}^d \bH_{\xi}(\bp) \kronF{x}{$\xi$}(t,\bp) + \sum_{\eta = 1}^{d-1} \bN_{\eta}(\bp)\left(\bu(t)\otimes \bx^{\circled{\tiny {$\eta$}}}(t,\bp)\right).
\end{equation}
The parametric case results can be obtained by following the same lines as for the non-parametric case and discussion in \cite{morBenG19} for non-structured systems; thus, here we only briefly sketch the ideas.  Here again, for simplicity, we present the results for the SISO case. In the MIMO case, we can use the idea of tangential interpolation as discussed in \Cref{appen:TangInterp}.

Similar to the non-parametric case, we can derive generalized transfer functions for the parametric system, which are given as follows:
\begin{subequations}\label{eq:pargeneral_TF}
	\begin{align}
		\bF_{\mathrm L}(s_1,\bp) &:=  \bC(\bp)\cK^{-1}(s_1,\bp)\bB(\bp),
		\\
		\bF^{(\xi)}_{\mathrm H}(s_1,\ldots,s_{\xi+1}, \bp) & := \nonumber \\ \bC(\bp)\cK^{-1}(s_{\xi+1},&\bp) \bH_{\mathrm \xi}(\bp) \left(\cK^{-1}(s_\xi,\bp)\bB(\bp) \otimes \cdots \otimes \cK^{-1}(s_{1},\bp)\bB(\bp)\right),
		\\
		\bF_{\mathrm N}^{(\eta)}(s_1,\ldots, s_{\eta+1},\bp) &:=  \nonumber \\
		\bC(\bp) \cK^{-1}(s_{\eta+1},&\bp) \bN_\eta(\bp)\left(\cK^{-1}(s_{\eta},\bp)\bB(\bp) \otimes \cdots \otimes \cK^{-1}(s_{1},\bp)\bB(\bp) \right),
	\end{align}
\end{subequations}
where $\cK^{-1}(s,\bp)$ is the Laplace transform of the fundamental solution of $\cL(\bp)$.  If a ROM is computed using the projection matrices $\bV$ and $\bW$, assuming $\bV$ and $\bW$ are full rank matrices, the reduced operator and matrices are given as follows:
\begin{equation}\label{eq:Paracompute_rom}
	\begin{aligned}
		\hat\cK(s,\bp)& = \bW^\top\chK(s,\bp)\bV, ~~  \bhN_{\eta}(\bp) = \bW^\top\bN_{\eta}(\bp)\bV^{\circled{\tiny{$\eta$}}},~~ \eta\in \{1,\ldots,d-1\},
		\\
		\bhB(\bp) &= \bW^\top\bB(\bp), ~~   \bhC(\bp)= \bC(\bp)\bV, ~~ \bhH_{\xi}(\bp) = \bW^\top\bH_{\xi}(\bp)\bV^{\circled{\tiny{$\xi$}}},~~ \xi \in \{2,\ldots,d\}, 
	\end{aligned}
\end{equation}
and the inverse Laplace transform of $\hat\cK^{-1}(s,\bp)$ is the fundamental solution related to the reduced operator $\hat\cL(\bp)$.
In the following, an extension of \Cref{thm:gen_interpolation} to the parametric case is presented that allows to construct an interpolatory ROM. Interpolation-based MOR for structured parametric bilinear systems has been investigated in \cite{morBenGW2020par}, which we generalize to more general nonlinear systems. 

\begin{theorem}\label{thm:parametric_interpolation}
	Consider the original polynomial parametric system \eqref{eq:struct_nonlin_sys_para}, together with its multivariate transfer functions  given in \eqref{eq:pargeneral_TF}. Let $\sigma_i,\bp_i$ and $\mu_i$, $i \in\{ 1,\ldots,\tilde r\}$, be interpolation points such that $\cK(s,\bp)$ is invertible for all $s\in \{\sigma_i,\mu_i\}$, $i \in \{1,\ldots,\tilde r\}$, $\bp \in \{\bp_1,\ldots, \bp_{\tr}\}$. Moreover, let $\bV$ and $\bW$ be defined as follows: 
	\begin{subequations}\allowdisplaybreaks
		\begin{align*}
			\cV_{\mathrm L}&= \bigcup_{i= 1}^{\tr} \range{\cK^{-1}(\sigma_i,\bp_i)\bB(\bp_i)},
			\\
			\cV_{\mathrm N} &= \bigcup_{\eta= 1}^{d-1}\bigcup_{i= 1}^{\tr} \range{ \cK^{-1}(\sigma_i,\bp_i) \bN_\eta(\bp_i)\left(\cK^{-1}(\sigma_i,\bp_i)\bB(\bp_i) \otimes \cdots \otimes \cK^{-1}(\sigma_i,\bp_i)\bB(\bp_i)\right)},  
			\\
			\cV_{\mathrm H} &=  \bigcup_{\xi= 2}^d\bigcup_{i= 1}^{\tr}  \range{ \cK^{-1}(\sigma_i,\bp_i) \bH_\xi(\bp_i) \left(\cK^{-1}(\sigma_i,\bp_i)\bB(\bp_i)\otimes \cdots \otimes \cK^{-1}(\sigma_i,\bp_i)\bB(\bp_i)\right) },
			\\ 
			\cW_{\mathrm L}&= \bigcup_{i= 1}^{\tr}\range{\cK^{-\top}(\mu_i,\bp_i)\bC(\bp_i)^\top}
			\\
			\cW_{\mathrm N} &= \bigcup_{\eta= 1}^{d-1} \bigcup_{i= 1}^{\tr}\ensuremath{\mathop{\mathrm{range}}\left( \cK^{-1}(\sigma_{i},\bp_i) \left(\bN_\eta(\bp_i) \right)_{(2)}\left(\cK^{-1}(\sigma_i,\bp_i)\bB(\bp_i) \otimes \cdots \right.\right.}
			\\
			&\hspace{5cm}  \otimes \cK^{-1}(\sigma_i,\bp_i)\bB(\bp_i) \otimes\cK^{-\top}(\mu_i,\bp_i)\bC(\bp_i)^\top\Big), 
			\\
			\cW_{\mathrm H} &= \bigcup_{\xi= 2}^d \bigcup_{i= 1}^{\tr}\ensuremath{\mathop{\mathrm{range}}\left(\cK^{-1}(\sigma_{i},\bp_i) \left(\bH_\xi(\bp_i)\right)_{(2)} \left(\cK^{-1}(\sigma_{i},\bp_i)\bB(\bp_i) \otimes \cdots \right.\right.}\\
			&\hspace{5cm}  \otimes \cK^{-1}(\sigma_{i},\bp_i)\bB(\bp_i)\otimes \cK^{-\top}(\mu_i,\bp_i)C(\bp_i)^\top \Big), \\
			\range{\bV} &= \cV_{\mathrm L}+ \cV_{\mathrm N}+ \cV_{\mathrm H},\\
			\range{\bW}&= \cW_{\mathrm L}+ \cW_{\mathrm N}+\cW_{\mathrm H},
		\end{align*}
	\end{subequations}
	where $\left(\bH_\xi (\bp)\right)_{(2)} \in \R^{n\times n^\xi}$ and $\left(\bN_\eta (\bp)\right)_{(2)} \in \R^{n\times m\cdot n^\xi}$ are, respectively, the mode-2 matricizations of the $(\xi{+}1)$-way symmetric tensor $\bten{H}_\xi (\bp)\in \R^{n\times \cdots \times n}$ and $(\eta{+}2)$-way symmetric tensor $\bten{N}_\eta(\bp) \in \R^{n\times \cdots \times n}$ whose mode-1 matricizations are $\bH_\xi(\bp)$  and $\bN_\eta(\bp)$, respectively.
	Then, the following interpolation conditions are fulfilled:
	\begin{subequations}\allowdisplaybreaks
		\begin{align*}
			\bF_{\mathrm L}(\sigma_i,\bp_{i}) &= \hat{\bF}_{\mathrm L}(\sigma_i,\bp_{i}), 
			\\
			\bF_{\mathrm L}(\mu_i,\bp_{i})&= \hat{\bF}_{\mathrm L}(\mu_i,\bp_{i}), 
			\\
			\bF_{\mathrm N}^{(\eta)}(\sigma_i,\ldots,\sigma_i,\bp_{i})&= 	\hat \bF_{\mathrm N}^{(\eta)}(\sigma_i,\ldots,\sigma_i, \bp_{i}),  
			\\
			\bF_{\mathrm N}^{(\eta)}(\sigma_i,\ldots,\sigma_i, \mu_i,\bp_{i})&= 	\hat{\bF}_{\mathrm N}^{(\eta)}(\sigma_i,\ldots,\sigma_i,\mu_i,\bp_{i}) 
			\\
			\bF_{\mathrm H}^{(\xi)}(\sigma_i,\ldots,\sigma_i,\bp_{i}) &= 	\hat{\bF}_{\mathrm H}^{(\xi)}(\sigma_i,\ldots,\sigma_i,\bp_{i}), 
			\\
			\bF_{\mathrm H}^{(\xi)}(\sigma_i,\ldots,\sigma_i, \mu_i,\bp_{i})&= 	\hat{\bF}_{\mathrm H}^{(\xi)}(\sigma_i,\ldots,\sigma_i, \mu_i,\bp_{i}),
		\end{align*}
	\end{subequations}
	provided the reduced matrix $\bW^\top\cK(s, \bp)\bV$ is non-singular for $s \in \{\sigma_i,\mu_i\}$ and $\bp \in\{\bp_i\}$, $i \in \{1,\ldots,\tilde r\}$.
\end{theorem}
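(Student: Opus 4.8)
The plan is to repeat the proof of \Cref{thm:gen_interpolation} essentially line for line, the only bookkeeping change being that the parameter value $\bp_i$ is rigidly attached to the $i$-th frequency pair $(\sigma_i,\mu_i)$: every range in the definitions of $\bV$ and $\bW$ is taken at the matching triple $(\sigma_i,\mu_i,\bp_i)$, and the reduced operator and matrices are the Petrov--Galerkin compressions in \eqref{eq:Paracompute_rom} evaluated at that same $\bp_i$. With the parameter frozen consistently within each block, the argument splits into copies of the non-parametric one indexed by $i$, following also the non-structured parametric treatment of \cite{morBenG19}.

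First I would establish the parametric analogues of the linear relations \eqref{eq:linearRel}: for each $i$, $\bV\,\hat\cK^{-1}(\sigma_i,\bp_i)\bhB(\bp_i) = \cK^{-1}(\sigma_i,\bp_i)\bB(\bp_i)$ and $\bhC(\bp_i)\hat\cK^{-1}(\mu_i,\bp_i)\bW^\top = \bC(\bp_i)\cK^{-1}(\mu_i,\bp_i)$. The first holds because $\cK^{-1}(\sigma_i,\bp_i)\bB(\bp_i)\in\cV_{\mathrm L}\subseteq\range{\bV}$, so it equals $\bV\bz$ for some $\bz$, whence, using $\bhB(\bp_i)=\bW^\top\bB(\bp_i)$, $\hat\cK(\sigma_i,\bp_i)=\bW^\top\cK(\sigma_i,\bp_i)\bV$, and non-singularity of $\hat\cK(\sigma_i,\bp_i)$, $\bV\hat\cK^{-1}(\sigma_i,\bp_i)\bhB(\bp_i)=\bV\hat\cK^{-1}(\sigma_i,\bp_i)\bW^\top\cK(\sigma_i,\bp_i)\bV\bz=\bV\bz$. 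The second is the transpose-side statement, which is the structured parametric linear interpolation fact of \cite{morAntBG10}; together these already give the two linear interpolation conditions $\bF_{\mathrm L}(\sigma_i,\bp_i)=\hat\bF_{\mathrm L}(\sigma_i,\bp_i)$ and $\bF_{\mathrm L}(\mu_i,\bp_i)=\hat\bF_{\mathrm L}(\mu_i,\bp_i)$.

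Next, for the conditions at $(\sigma_i,\ldots,\sigma_i,\bp_i)$ I would reuse the telescoping chain from the proof of \Cref{thm:gen_interpolation}: push the $\bV$-relation through every Kronecker factor, write $\cK^{-1}(\sigma_i,\bp_i)\bN_\eta(\bp_i)\bigl(\cK^{-1}(\sigma_i,\bp_i)\bB(\bp_i)\otimes\cdots\bigr)=\bV\bz$ --- legitimate because this vector lies in $\cV_{\mathrm N}\subseteq\range{\bV}$ --- and collapse $\bV\hat\cK^{-1}(\sigma_i,\bp_i)\bW^\top\cK(\sigma_i,\bp_i)\bV\bz=\bV\bz$; premultiplying by $\bC(\bp_i)$ and using $\bhC(\bp_i)=\bC(\bp_i)\bV$ yields the $\bF_{\mathrm N}^{(\eta)}$ identity, and the same with $\cV_{\mathrm H}$ the $\bF_{\mathrm H}^{(\xi)}$ one. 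For the conditions with a trailing $\mu_i$, i.e. at $(\sigma_i,\ldots,\sigma_i,\mu_i,\bp_i)$, I would run the dual argument on the $\bW$-side: use the mode-2 matricization identity \eqref{eq:matricizationRelation} together with the symmetry of the tensors $\bten{H}_\xi(\bp_i)$ and $\bten{N}_\eta(\bp_i)$ (\Cref{lemma:sym_tensor}) to move the factor $\cK^{-\top}(\mu_i,\bp_i)\bC(\bp_i)^\top$ into the last Kronecker slot, then invoke the $\bW$-relation from the previous step and the inclusion $\cK^{-1}(\sigma_i,\bp_i)\bigl(\bN_\eta(\bp_i)\bigr)_{(2)}\bigl(\cdots\otimes\cK^{-\top}(\mu_i,\bp_i)\bC(\bp_i)^\top\bigr)\in\cW_{\mathrm N}\subseteq\range{\bW}$ (and the analogous one for $\cW_{\mathrm H}$) to close the telescoping exactly as for the structured parametric bilinear case in \cite{morBenGW2020par}.

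The hard part is purely the bookkeeping around the parameter coupling: the telescoping identities close only because $\bp_i$ is the \emph{same} in the inner factors $\cK^{-1}(\sigma_i,\bp_i)\bB(\bp_i)$, in the outer reduced operator $\hat\cK(\sigma_i,\bp_i)$, and in $\bhB(\bp_i)$, $\bhC(\bp_i)$, $\bhN_\eta(\bp_i)$, $\bhH_\xi(\bp_i)$ --- so every Kronecker factor within a block lies in a single $\range{\bV}$ and $\hat\cK(\sigma_i,\bp_i)=\bW^\top\cK(\sigma_i,\bp_i)\bV$ holds at precisely that point; if the parameter differed between inner and outer evaluations, the collapses would fail. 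The only other ingredient is the standing non-singularity of $\bW^\top\cK(s,\bp)\bV$ at the relevant $(s,\bp)$, which is part of the hypothesis, so beyond this the details carry over verbatim from \Cref{thm:gen_interpolation}.
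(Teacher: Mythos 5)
Your proposal is correct and is exactly the argument the paper intends: the paper omits this proof entirely, stating only that it is ``analogous to the proof of Theorem~\ref{thm:gen_interpolation}'' and extends \cite[Thm.~2]{morBenGW2020par}, and your write-up fills in precisely that analogy (parameter frozen per index $i$, parametric versions of the linear relations \eqref{eq:linearRel}, then the $\bV$-side and $\bW$-side telescoping collapses). The only remark worth adding is that the leading factor $\cK^{-1}(\sigma_i,\bp_i)$ in the printed definitions of $\cW_{\mathrm N}$ and $\cW_{\mathrm H}$ should read $\cK^{-\top}(\sigma_i,\bp_i)$ for the $\bW$-side collapse to close, as in Theorem~\ref{thm:gen_interpolation}; your argument implicitly uses the corrected version.
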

\begin{proof}
	The proof is analogous to the proof of \Cref{thm:gen_interpolation} and extends the proof of \cite[Thm. 2]{morBenGW2020par}. Therefore, for the brevity of the paper, we skip the proof.  
\end{proof}

Note that we have assumed a general parametric structure for the system matrices, e.g., $\cL(\bp)$ (or $\cK(s,\bp)), \bH_{\xi}(\bp)$. Then, the corresponding ROM  can be computed as shown in \eqref{eq:Paracompute_rom}, but it may be required to compute a ROM for each parameter setting. However, if we assume an affine parametric structure of the system matrices as follows:
\begin{equation}\label{eq:affineSys}
	\begin{aligned}
		\cK(s,\bp) &= \sum_{i=1}^{t_A}\kappa_i(s,\bp) \bA^{(i)},& \quad  \bB(\bp) &= \sum_{i=1}^{t_b} \alpha^{(i)}_b(\bp) \bB^{(i)},  \quad  \bC(\bp) = \sum_{i=1}^{t_c} \alpha^{(i)}_c(\bp) \bC^{(i)},  \\
		\bN_\eta(\bp) &= \sum_{i=1}^{t_{n_\eta}} \alpha^{(i)}_{n_\eta}(\bp) \bN_\eta^{(i)}, &\quad  \bH_\xi(\bp) &= \sum_{i=1}^{t_{h_\xi}} \alpha^{(i)}_{h_\xi}(\bp) \bH_\xi^{(i)},
	\end{aligned}
\end{equation}
then the resulting ROM with the same structure can be determined using
\begin{equation}\label{eq:parameter_reduced}
	\begin{aligned}
		\cK(s,\bp) &= \sum_{i=1}^{t_A}\kappa_i(s,\bp) \bhA^{(i)}, &\quad  \bhB(\bp) &= \sum_{i=1}^{t_b} \alpha^{(i)}_b(\bp) \bhB^{(i)},  \quad  \bhC(\bp) = \sum_{i=1}^{t_c} \alpha^{(i)}_c(\bp) \bhC^{(i)}, \\
		\bhN_\eta(\bp) &= \sum_{i=1}^{t_{n_\eta}} \alpha^{(i)}_{n_\eta}(\bp) \bhN_\eta^{(i)}, &\quad  \bhH_\xi(\bp) &= \sum_{i=1}^{t_{h_\xi}} \alpha^{(i)}_{h_\xi}(\bp) \bhH_\xi^{(i)},
	\end{aligned}
\end{equation}
where the original matrices are reduced by the standard projections
\begin{equation}\label{eq:parPetrovGalerk}
	\begin{aligned}
		\bhA^{(i)} &= \bW^\top\bA^{(i)}\bV, \quad\quad  \bhB^{(i)} = \bW^\top\bB^{(i)}, \quad\quad   \bhC^{(1)} = \bC^{(i)}\bV, \\
		\bhN_{\eta}^{(i)} &= \bW^\top\bN_{\eta}^{(i)}\bV^{\circled{\tiny{$\eta$}}}, \quad   \bhH_{\xi}^{(i)} = \bW^\top\bH_{\xi}^{(i)}\bV^{\circled{\tiny{$\xi$}}},
	\end{aligned}
\end{equation}
for $\eta\in \{1,\ldots,d-1\}$ and $\xi \in \{2,\ldots,d\}$. 
Consequently, we can pre-compute  these reduced matrices. Hence, the computation of a ROM for each parameter becomes numerically cheaper. 

\begin{remark} As in \Cref{theo:Hermite_intep}, if $\sigma_i = \mu_i$, Hermite interpolation conditions with respect to the parameter are satisfied, i.e., 
\begin{align*}
\grad_{\bp}\,\bF_{\mathrm L}(\sigma_i,\bp_{i}) &= \grad_{\bp}\,\hat{\bF}_{\mathrm L}(\sigma_i,\bp_{i}),
\\
\grad_{\bp}\, \bF_{\mathrm N}^{(\eta)}(\sigma_i,\ldots,\sigma_i,\bp_{i})&= 	\grad_{\bp}\,\hat \bF_{\mathrm N}^{(\eta)}(\sigma_i,\ldots,\sigma_i, \bp_{i}),  
\\
\grad_{\bp}\,\bF_{\mathrm H}^{(\xi)}(\sigma_i,\ldots,\sigma_i,\bp_{i}) &= 	\grad_{\bp}\,\hat{\bF}_{\mathrm H}^{(\xi)}(\sigma_i,\ldots,\sigma_i,\bp_{i}).
\end{align*}
Thus, the sensitivity of the multivariate transfer functions with respect to the parameter is preserved, which may be useful in, e.g., parameter optimization. Since this result can be proven in a similar way as the one in \Cref{theo:Hermite_intep} and \cite[Thm. 3]{morBenGW2020par}, we decide to omit it for the brevity of the paper.
\end{remark}

\subsection{Structured input and output matrices}
One may also consider the case where the input and output matrices $\bB$ and $\bC$ depend on the frequency $s$. This happens to be the case if the system is subject to input or/and output delays, e.g, $\cB(s) = \bB_1 +\bB_2e^{-\tau s}$.  In such scenarios, we can employ the results of \Cref{thm:gen_interpolation} by replacing $\bB$ by $\cB(\sigma_i)$ and $\bC$ by $\cC(\mu_i)$ everywhere, e.g., the matrices $\bV_{\mathrm L}$ and $\bW_{\mathrm L}$ in \Cref{thm:gen_interpolation} are given by
\begin{subequations}\allowdisplaybreaks
	\begin{align*}
		\bV_{\mathrm L}	&= 	\range{\cK^{-1}(\sigma_1)\cB(\sigma_1),\ldots, \cK^{-1}(\sigma_{\tr})\cB(\sigma_{\tr})},\\
		\bW_{\mathrm L}	&= 	\range{\cK^{-\top}(\mu_1)\cC^\top(\mu_1),\ldots, \cK^{-\top}(\mu_{\tr})\cC^\top(\mu_{\tr})}.
	\end{align*}
\end{subequations}
The interpolation results presented in this paper also hold in this context. Since their proof is a straightforward extension of the results presented up-to-now, we refrain from providing details. 
	\section{Determining lower-order approximate interpolatory models}\label{sec:DROP}
	In the previous sections, we have stated  results for constructing interpolatory ROMs by a Petrov-Galerkin projection. In the proposed methodology, the quality of the ROMs highly depends on the choice of the interpolation points.  It is an open question of how to select these interpolation points optimally, and this remains an important problem to be investigated in the future. 

	However, inspired by the discussions in \cite{morIonA14,morAntGI16,morBenG21,morGosA18}, we propose a scheme based on a oversampling as follows. 
In this scheme, we first compute the projection matrices $\bV$ and $\bW$ by considering several interpolation points in a given domain. After that, we aim to determine the dominant subspaces that not only allow us to determine lower-order models but also approximately interpolate all the considered interpolation points. 
	To that end, let us assume to have a structured polynomial system as in \eqref{eq:struct_nonlin_sys} and the projection matrices $\bV$ and $\bW$ as defined in \Cref{thm:gen_interpolation}. For this section, we assume  that $\bV$ and $\bW$ contain the columns of the Krylov basis exactly for the given interpolation points. Furthermore, let us construct the matrices 
	\begin{equation}\label{eq:loewner_str}
		\begin{aligned}
			\tilde{\cK}(s) &= \bW^{\top}\cK(s)\bV, \quad  \tilde\bB = \bW^{\top}\bB, \quad   \tilde\bC= \bC\bV,\\
			\tilde\bH_{\xi} &= \bW^{\top}\bH_{\xi}\bV^{\circled{\tiny{$\xi$}}}, \quad  \tilde\bN_{\eta} = \bW^{\top}\bN_{\eta}\bV^{\circled{\tiny{$\eta$}}}.
		\end{aligned}
	\end{equation}
Next, we can extend the observation from \cite{morAntGI16,morBenG21,morGosA18}. This means that if the pencil $\tilde\cK(s)$ is regular, then the realization \eqref{eq:loewner_str} is a realization interpolating the original model. 

However, when we consider many interpolation points, then often the pencil  $\tilde\cK(s)$ becomes singular. In this case, there exists a lower-order realization that can interpolate at all the given interpolation points. To obtain such a realization, we follow the idea proposed in \cite{morAntGI16,morBenG21,morGosA18,morBenGP19}. For this, we first consider the form of  $\cK(s)$ to be
\begin{equation}
	\cK(s) = \alpha_1(s)\bA^{(1)} + \cdots + \alpha_l(s)\bA^{(l)},
\end{equation} 
and assume that 
\begin{equation}\label{eq:RankCondition}
	\rank{\begin{bmatrix}
			\bW^\top\bA^{(1)}\bV , \dots , \bW^\top\bA^{(l)}\bV
	\end{bmatrix}} = \rank{\begin{bmatrix}
			\bW^\top\bA^{(1)}\bV \\ \vdots \\ \bW^\top\bA^{(l)}\bV
	\end{bmatrix}} = \hr.
\end{equation}
Then there exists a structured system of order $\hr\leq r$, realizing the model whose generalized transfer functions also interpolate at the pre-defined interpolation points. Consequently,  using~\eqref{eq:RankCondition}, we can estimate the complexity of the underlying dynamical system.  Additionally, an SVD procedure based on the matrices in \eqref{eq:RankCondition} allows us to construct a ROM using appropriate subspaces. 

To obtain the corresponding subspaces and the ROM, we propose  \Cref{algo:Inter_structured_para}, enabling construction of ROMs for polynomial structured systems \eqref{eq:struct_nonlin_sys}. This can be seen as an extension of \cite[Algo. 3]{morBenG21} to structured polynomial systems.  The procedure consists in selecting interpolation points $\sigma_i$, $ i \in \{1,\ldots,\tilde r\}$, and constructing the matrices $\bV$ and $\bW$ as in \Cref{thm:gen_interpolation} (steps 2 and 3) containing the original Krylov basis.  Then, in step 4, we compute the SVDs of the matrices in \eqref{eq:SVDalgo}. As  discussed earlier, the numerical rank of these matrices  indicates the order of a good ROM, and the left and right singular vectors allow us to determine dominant subspaces. Hence, in step 5, the projection matrices $\bV_{\mathrm e}$ and $\bW_{\mathrm e}$ are constructed. Finally, in step 6, the ROM  is computed in the framework of Petrov-Galerkin projection. 

\Cref{algo:Inter_structured_para} can easily be adapted to parametric and MIMO cases by computing the projection matrices $\bV$ and $\bW$ appropriately in step 3 of the algorithm.  In this case, we need to specify parameters within the range of interest and tangential directions along with the interpolation points for the frequency variable.

\begin{algorithm}[tb]
	\caption{Construction of ROMs for Structured Polynomial Systems.}\label{algo:Inter_structured_para}
	\begin{algorithmic}[1]
		\State \textbf{Input:} The polynomial structured system matrices as in \eqref{eq:affineSys}, and order of a ROM~$r$.
		\State Choose left and right interpolation points. 
		\State Compute $\bV$ and $\bW$ using the interpolation points as in \Cref{thm:gen_interpolation}.
		\State Determine SVDs  \begin{equation}\label{eq:SVDalgo}
		\begin{bmatrix}
		\bW^\top\bA^{(1)}\bV, \dots, \bW^\top\bA^{(l)}\bV
		\end{bmatrix} = \bW_1 \mathrm\Sigma_l \tilde{\bV}^\top \quad \text{and} \quad  \begin{bmatrix}
		\bW^\top\bA^{(1)}\bV \\ \vdots \\ \bW^\top\bA^{(l)}\bV
		\end{bmatrix} = \tilde{\bW}\mathrm\Sigma_r \bV_1^\top 
		\end{equation} \label{step:SVD}
		\State Compute projection matrices: 
		$\bV_{\mathrm e} = \bV\bV_{1}(:,1{:}r)$ and $\bW_{\mathrm e} = \bW\bW_{1}(:,1{:}r) $.
		\State Compute reduced matrices:
		\[
		\begin{aligned}
		 \chK(s) &= \bW_{\mathrm e}^{\top}\cK(s)\bV_{\mathrm e}, \quad  \bhB = \bW_{\mathrm e}^{\top}\bB, \quad   \bhC= \bC\bV_{\mathrm e},\\
		\bhH_{\xi} &= \bW_{\mathrm e}^{\top}\bH_{\xi}\bV_{\mathrm e}^{\circled{\tiny{$\xi$}}}, \quad  \bhN_{\eta} = \bW_{\mathrm e}^{\top}\bN_{\eta}\bV_{\mathrm e}^{\circled{\tiny{$\eta$}}},
				\end{aligned}
		\]
		\State \textbf{Output:} The reduced-order matrices: $\hat{\cK}(s),\hat{\bB}$, $\hat{\bC}$, $\bhH_{\xi}$  and $\bhN_{\eta}$.
	\end{algorithmic}
\end{algorithm}

\begin{remark}\label{rem:Galerkin} 
	In many applications, such as  dynamical systems with symmetric matrices, or systems with a dissipative realization, it is desirable to apply Galerkin projections, i.e., to enforce $\bW = \bV$ (see, e.g., \cite{morCasLE12}). As a result, the reduced systems would potentially preserve system properties, e.g., dissipativity, symmetry. Although \Cref{algo:Inter_structured_para} is designed for Petrov-Galerkin projection, one can still adapt it to allow only Galerkin  projection. Indeed, in step 3 one just needs to compute $\bV$ and set $\bW = \bV$. Hence, in step 5, $\bW_{\mathrm e} = \bV_{\mathrm e}$. 
\end{remark}

\begin{remark} 
	It is worth mentioning that one does not need to use any hyper reduction method for the fast computation of the nonlinear terms. Indeed, as shown in~\eqref{eq:RedMatrices}, the nonlinear terms are already directly computed using the Petrov-Galerkin projection. Additionally, at a first glance, the computation of the reduced matrix $\bhH_{\xi} = \bW^{\top}\bH_{\xi}\bV^{\circled{\tiny{$\xi$}}}$ 
	seems to be numerically expensive as one needs to evaluate $\bV^{\circled{\tiny{$\xi$}}}$. However, the authors in \cite{morBenG19} proposed a procedure based on Hadamard product form of the term $\bH_{\xi}x^{\circled{\tiny{$\xi$}}} = \cA_1 x \circ \dots \circ\cA_{\eta}x $. As a consequence, the computation of $\bhH_{\xi}$ can be performed without the explicit computation of $\bV^{\circled{\tiny{$\xi$}}}$. 
\end{remark}

\section{Numerical examples}\label{sec:Numerics}
In this section, we illustrate the efficiency of the proposed method by means of three examples. We also compare with the recent interpolation method proposed in \cite{morBenGW2020par,morBenGW20} for two of the examples. To integrate nonlinear structured systems, we use an explicit Euler scheme, and for non-structured systems, we use the function \texttt{ode15s} in \matlab. 
For the random number generator, we have used the seed `$0$'.  All experiments were performed using \matlab~(2020b) running on a Macbook Pro with 2,3 GHz 8-Core \intel~\coreinine~CPU, 16GB of RAM, and Mac OS X v10.15.6.

\subsection{Parametric Chafee-Infante equation}
In our first example,  we consider the one-dimensional parametric Chafee-Infante system governed by the following partial differential equation
\begin{equation*}
	\begin{aligned} 
		\dot{v}(t) &=v_{xx} +v(\bp-v^2),\quad x\in (0,1)\times (0,T), \quad &v(0,t) = u(t), \quad t\in(0,T),\\
		v_x(L,t) &= 0,\quad  t\in(0,T), &v(x,0) =0,\quad x\in(0,1), 
	\end{aligned}
\end{equation*}
where the parameter is assumed to lie in the interval $\bp \in \left[0.25,2\right]$. After a spatial discretization using a finite-difference method for a uniform grid with $k = 500$ points, one obtains a high-fidelity cubic parametric model of  the form:
\begin{equation}\label{eq:CI_Example}
	\begin{aligned}
		\dot{\bv}(t) &= \bA_1 \bv(t) + \bp\bA_p + \bH_3 \left(\bx(t)\otimes \bx(t) \otimes \bx(t)\right) 	+\bB \bu(t), \\
		\by(t) &= \bC \bx(t).
	\end{aligned}
\end{equation}  
The MOR problem for its non-parametric variant (for $\bp = 1$) has been considered in \cite{morBenG21}, where the polynomial non-linearity in the ROMs is preserved. Also, authors in, e.g., \cite{morBenB15,morBenG17,morBenGG18} have constructed ROMs of the model \eqref{eq:CI_Example}, but they do not preserve the cubic structure. These methods rewrite the model into quadratic-bilinear form, followed by employing MOR techniques for quadratic-bilinear systems. 

Here, we aim at constructing a reduced cubic parametric system using \Cref{algo:Inter_structured_para}. For this, we take $200$ points in the frequency range $\left[10^{-3},10^3\right]$ and the equal number of points for the parameter in the considered interval. 

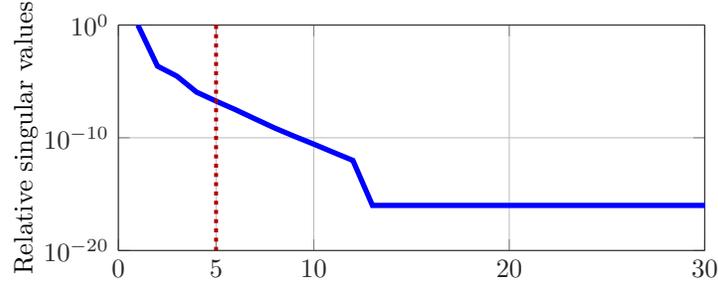
\begin{figure}[!tb]
	\centering
	\setlength\fheight{3.0cm}
	\setlength\fwidth{.5\textwidth}
%
%
%
\definecolor{mycolor1}{rgb}{0.00000,0.44700,0.74100}%
\begin{tikzpicture}

\begin{axis}[%
width=\fwidth,
height=\fheight,
scale only axis,
separate axis lines,
every outer x axis line/.append style={white!15!black},
every x tick label/.append style={font=\color{white!15!black}},
xmin=0,
xmax=30,
every outer y axis line/.append style={white!15!black},
every y tick label/.append style={font=\color{white!15!black}},
ymode=log,
ymin=1e-20,
ymax=1,
xtick = {0,5,10,20,30},
grid = major, 
yminorticks=true,
ylabel = {Relative singular values},
]
\addplot [color=blue,line width = 2pt]
  table[row sep=crcr]{1	1\\
2	0.000227405444096018\\
3	2.88110124409778e-05\\
4	1.12050883443126e-06\\
5	1.77639451667461e-07\\
6	3.05513124815707e-08\\
7	4.66532125082632e-09\\
8	7.31170141745193e-10\\
9	1.3598454824944e-10\\
10	2.72984750996104e-11\\
11	5.10059718656763e-12\\
12	9.83143145454434e-13\\
13	9.99200722162641e-17\\
14	9.99200722162641e-17\\
15	9.99200722162641e-17\\
16	9.99200722162641e-17\\
17	9.99200722162641e-17\\
18	9.99200722162641e-17\\
19	9.99200722162641e-17\\
20	9.99200722162641e-17\\
21	9.99200722162641e-17\\
22	9.99200722162641e-17\\
23	9.99200722162641e-17\\
24	9.99200722162641e-17\\
25	9.99200722162641e-17\\
26	9.99200722162641e-17\\
27	9.99200722162641e-17\\
28	9.99200722162641e-17\\
29	9.99200722162641e-17\\
30	9.99200722162641e-17\\
31	9.99200722162641e-17\\
32	9.99200722162641e-17\\
33	9.99200722162641e-17\\
34	9.99200722162641e-17\\
35	9.99200722162641e-17\\
36	9.99200722162641e-17\\
37	9.99200722162641e-17\\
38	9.99200722162641e-17\\
39	9.99200722162641e-17\\
40	9.99200722162641e-17\\
41	9.99200722162641e-17\\
42	9.99200722162641e-17\\
43	9.99200722162641e-17\\
44	9.99200722162641e-17\\
45	9.99200722162641e-17\\
46	9.99200722162641e-17\\
47	9.99200722162641e-17\\
48	9.99200722162641e-17\\
49	9.99200722162641e-17\\
50	9.99200722162641e-17\\
};
\addplot [color=black!30!red,dotted, mark options={solid},mark size = 1.5,line width = 1.5pt]
  table[row sep=crcr]{5	1\\
  5 1e-20\\
  };
\end{axis}
\end{tikzpicture}

	\caption{Parametric Chafee-Infante equation: relative decay of singular values based on the Loewner pencil.}
	\label{figure:chafee_SVD_Para}
\end{figure}

First, in \Cref{figure:chafee_SVD_Para}, we plot the decay of the singular values based on the matrices in \eqref{eq:RankCondition} and we observe a very fast decay.  Subsequently, we determine a reduced parametric system of order $r = 5$. To compare the quality of the ROM, we simulate for the same inputs $u^{(1)}(t) = 10(\sin(\pi t)+1)$ and $u^{(2)}(t) = 5(te^{-t}) $ and  parameters $\bp = \{0.25,1,2\}$. We plot the transient response and relative errors in \Cref{fig:chafee_input1_para,fig:chafee_input2_Para}, illustrating that the reduced parametric system can capture the dynamics of the high-fidelity model very well for different inputs and different parameters. 

\begin{figure}[!tb]
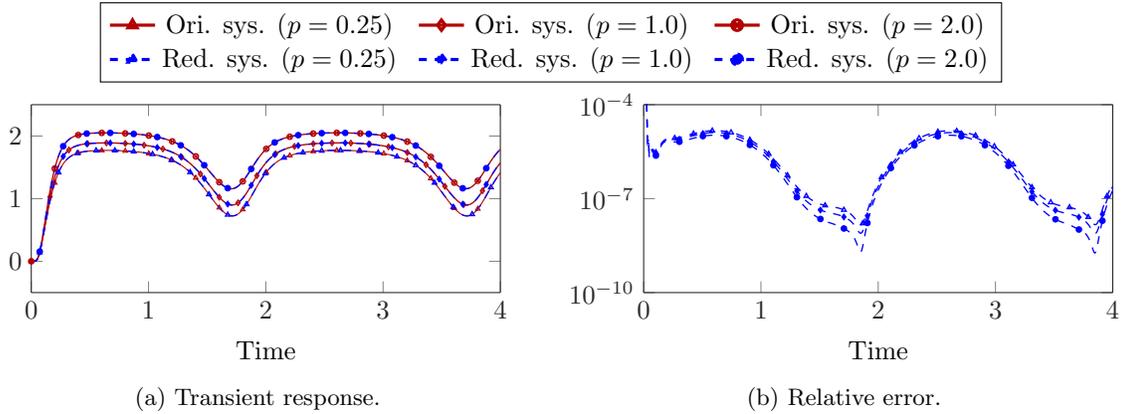

	\centering
	\begin{tikzpicture}
		\begin{customlegend}[legend columns=3, legend style={/tikz/every even column/.append style={column sep=0.2cm}} , legend entries={Ori. sys. $(p = 0.25)$,Ori. sys. $(p = 1.0)$,Ori. sys. $(p = 2.0)$,Red. sys. $(p = 0.25)$, Red. sys. $(p = 1.0)$,  Red. sys. $(p = 2.0)$}, ]
			\addlegendimage{color=black!30!red,solid,line width=1.2pt,mark = triangle}
			\addlegendimage{color=black!30!red,solid,line width=1.2pt,mark = diamond}
			\addlegendimage{color=black!30!red,solid,line width=1.2pt,mark = otimes}
			\addlegendimage{color=blue,solid,line width=1.2pt,mark = triangle*,dashed}
			\addlegendimage{color=blue,solid,line width=1.2pt,mark = diamond*,dashed}
			\addlegendimage{color=blue,solid,line width=1.2pt,mark = otimes*,dashed}    \end{customlegend}
	\end{tikzpicture}    
	\begin{subfigure}[t]{.5\textwidth}
		\centering
		\setlength\fheight{2.5cm}
		\setlength\fwidth{.8\textwidth}
	\input{Figures/ChafeeInput1_Response_Para.tikz}%
        
		\caption{Transient response.}
		\label{fig:chafee_input1_res_Para}
	\end{subfigure}%
	\begin{subfigure}[t]{.5\textwidth}
		\centering
		\setlength\fheight{2.5cm}
		\setlength\fwidth{.8\textwidth}
	\input{Figures/ChafeeInput1_RelErr_Para.tikz}%

		\caption{Relative error.}
		\label{fig:chafee_input1_err_Para}
	\end{subfigure}
	\caption{Parametric Chafee-Infante equation: a comparison of the original and ROM for the input $\bu^{(1)} = 10\left(\sin(\pi t)+1\right)$ and for different parameter values.}
	\label{fig:chafee_input1_para}
\end{figure}

\begin{figure}[!tb]
	\centering
	\begin{tikzpicture}
		\begin{customlegend}[legend columns=3, legend style={/tikz/every even column/.append style={column sep=0.2cm}} , legend entries={Ori. sys. $(p = 0.25)$,Ori. sys. $(p = 1.0)$,Ori. sys. $(p = 2.0)$,Red. sys. $(p = 0.25)$, Red. sys. $(p = 1.0)$,  Red. sys. $(p = 2.0)$}, ]
			\addlegendimage{color=black!30!red,solid,line width=1.2pt,mark = triangle,every mark/.append style={solid},}
			\addlegendimage{color=black!30!red,solid,line width=1.2pt,mark = diamond,every mark/.append style={solid},}
			\addlegendimage{color=black!30!red,solid,line width=1.2pt,mark = otimes,every mark/.append style={solid},}
			\addlegendimage{color=blue,solid,line width=1.2pt,mark = triangle*,dashed,every mark/.append style={solid},}
			\addlegendimage{color=blue,solid,line width=1.2pt,mark = diamond*,dashed,every mark/.append style={solid},}
			\addlegendimage{color=blue,solid,line width=1.2pt,mark = otimes*,dashed,every mark/.append style={solid},}    \end{customlegend}
	\end{tikzpicture}  
	\begin{subfigure}[t]{.5\textwidth}
		\centering
		\setlength\fheight{2.5cm}
		\setlength\fwidth{.8\textwidth}
	\input{Figures/ChafeeInput2_Response_Para.tikz}%
        
		\caption{Transient response.}
		\label{fig:chafee_input2_res_Para}
	\end{subfigure}%
	\begin{subfigure}[t]{.5\textwidth}
		\centering
		\setlength\fheight{2.5cm}
		\setlength\fwidth{.8\textwidth}
	\input{Figures/ChafeeInput2_RelErr_Para.tikz}%

		\caption{Relative error.}
		\label{fig:chafee_input2_err_Para}
	\end{subfigure}
	\caption{Parametric Chafee-Infante equation: a comparison of the original and ROM for the input $\bu^{(2)} = 5\left(e^{-t}t\right)$ and for different parameter values.}
	\label{fig:chafee_input2_Para}
\end{figure}

\subsection{Mechanical System}
In our second example, we consider a damped mass-spring system from \cite{morBenGW20}. The dynamics of the system is governed by a second-order bilinear system of the form:
\begin{equation*} 
	\begin{aligned}
		\bM \ddot{\bx}(t) + \bD \dot{\bx}(t) + \bK \bx(t) &=
		\bN_{1} \bx(t) \bu_{1}(t) + \bN_{2} \bx(t) \bu_{2}(t) + 	\bB\bu(t),\\
		\by(t) &= \bC \bx(t) \bx(t).
	\end{aligned}
\end{equation*}
We consider the same setting as provided in \cite[Sec. 4.2]{morBenGW20}. The order of the full model is $n = 1\,000$, and the model has two inputs and two outputs.  Mechanical systems can  have many structural properties such as passivity, and positive definiteness of mass, damping, and stiffness matrices, see, e.g., the survey paper~\cite{morBedBDetal20}. Therefore, it is desired to preserve these properties in the ROM. This can be achieved if the ROM is determined via Galerkin  (one-sided) projection instead of Petrov-Galerkin (two-sided) one. Hence, in this example, we construct a ROM using Galerkin projection via \Cref{algo:Inter_structured_para} as discussed in \Cref{rem:Galerkin}. 

\begin{figure}[!tb]
	\centering
	\setlength\fheight{3.0cm}
	\setlength\fwidth{.5\textwidth}
%
\definecolor{mycolor1}{rgb}{0.00000,0.44700,0.74100}%
\begin{tikzpicture}

\begin{axis}[%
width=0.951\fwidth,
height=\fheight,
at={(0\fwidth,0\fheight)},
scale only axis,
xmin=0,
xmax=100,
ymode=log,
ymin=8.86023529873289e-15,
ymax=8860.2352987329,
yminorticks=true,
axis background/.style={fill=white},
xmajorgrids,
ymajorgrids,
yminorgrids,
xlabel = {$k$},
ylabel = {Singular values ($\sigma_k$)},
legend style={legend cell align=left, align=left, draw=white!15!black}
]
\addplot [color=mycolor1, line width = 1.2pt]
  table[row sep=crcr]{%
1	4813.9376637967\\
2	4091.10110906624\\
3	336.270003259226\\
4	239.216310342766\\
5	77.4828942614593\\
6	56.4257731867225\\
7	13.6245840011276\\
8	11.0229063336737\\
9	3.1212238524053\\
10	2.55159596688714\\
11	0.663697835565845\\
12	0.527870112125615\\
13	0.151814620738479\\
14	0.118606189041056\\
15	0.032194723742485\\
16	0.0269700785367365\\
17	0.0070526892196888\\
18	0.00613869802694754\\
19	0.00154605769752436\\
20	0.00134158014303899\\
21	0.000346890991858145\\
22	0.000310769113173767\\
23	7.52917570243549e-05\\
24	6.93162046605202e-05\\
25	1.70315119393293e-05\\
26	1.60040839144292e-05\\
27	3.87107039098806e-06\\
28	3.52652266539145e-06\\
29	8.79581691916005e-07\\
30	8.13023862405971e-07\\
31	1.9393289000195e-07\\
32	1.8399334167382e-07\\
33	4.31688149283455e-08\\
34	4.8100899901116e-13\\
35	4.8100899901116e-13\\
36	4.8100899901116e-13\\
37	4.8100899901116e-13\\
38	4.8100899901116e-13\\
39	4.8100899901116e-13\\
40	4.8100899901116e-13\\
41	4.8100899901116e-13\\
42	4.8100899901116e-13\\
43	4.8100899901116e-13\\
44	4.8100899901116e-13\\
45	4.8100899901116e-13\\
46	4.8100899901116e-13\\
47	4.8100899901116e-13\\
48	4.8100899901116e-13\\
49	4.8100899901116e-13\\
50	4.8100899901116e-13\\
51	4.8100899901116e-13\\
52	4.8100899901116e-13\\
53	4.8100899901116e-13\\
54	4.8100899901116e-13\\
55	4.8100899901116e-13\\
56	4.8100899901116e-13\\
57	4.8100899901116e-13\\
58	4.8100899901116e-13\\
59	4.8100899901116e-13\\
60	4.8100899901116e-13\\
61	4.8100899901116e-13\\
62	4.8100899901116e-13\\
63	4.8100899901116e-13\\
64	4.8100899901116e-13\\
65	4.8100899901116e-13\\
66	4.8100899901116e-13\\
67	4.8100899901116e-13\\
68	4.8100899901116e-13\\
69	4.8100899901116e-13\\
70	4.8100899901116e-13\\
71	4.8100899901116e-13\\
72	4.8100899901116e-13\\
73	4.8100899901116e-13\\
74	4.8100899901116e-13\\
75	4.8100899901116e-13\\
76	4.8100899901116e-13\\
77	4.8100899901116e-13\\
78	4.8100899901116e-13\\
79	4.8100899901116e-13\\
80	4.8100899901116e-13\\
81	4.8100899901116e-13\\
82	4.8100899901116e-13\\
83	4.8100899901116e-13\\
84	4.8100899901116e-13\\
85	4.8100899901116e-13\\
86	4.8100899901116e-13\\
87	4.8100899901116e-13\\
88	4.8100899901116e-13\\
89	4.8100899901116e-13\\
90	4.8100899901116e-13\\
91	4.8100899901116e-13\\
92	4.8100899901116e-13\\
93	4.8100899901116e-13\\
94	4.8100899901116e-13\\
95	4.8100899901116e-13\\
96	4.8100899901116e-13\\
97	4.8100899901116e-13\\
98	4.8100899901116e-13\\
99	4.8100899901116e-13\\
100	4.8100899901116e-13\\
};

\end{axis}

\begin{axis}[%
width=1.227\fwidth,
height=1.227\fheight,
at={(-0.16\fwidth,-0.135\fheight)},
scale only axis,
xmin=0,
xmax=1,
ymin=0,
ymax=1,
axis line style={draw=none},
ticks=none,
axis x line*=bottom,
axis y line*=left,
legend style={legend cell align=left, align=left, draw=white!15!black}
]
\end{axis}
\end{tikzpicture}

	\caption{Mechanical example: The decay of singular values obtained from \Cref{algo:Inter_structured_para}.}
	\label{fig:SO_SVD}
\end{figure}
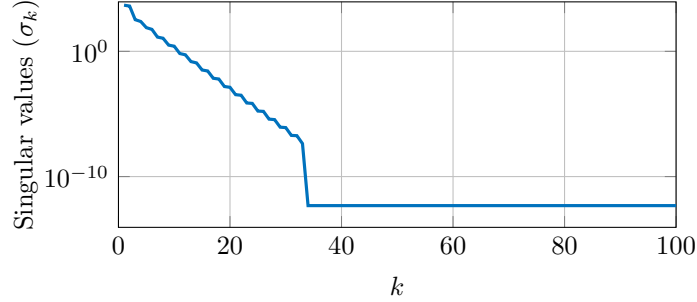

Next, to employ \Cref{algo:Inter_structured_para}, we consider $1\,000$ logarithmically distributed  frequency points on the imaginary axis in the range $\left[10^{-3},10^{3}\right]$. Since the system is MIMO, we also choose tangential directions which are taken randomly. We then observe the singular values obtained from \Cref{algo:Inter_structured_para} in \Cref{fig:SO_SVD}, indicating a sharp decay of the singular values.  From the figure, we note that the singular values after $33$ are at the level of machine precision. 

We determine ROMs of order $r = \{10,20,30\}$ (denoted by \texttt{StrDsp\_SO}). We compare the quality of the ROMs \texttt{StrDsp\_SO} with the method proposed in \cite{morBenGW20}. The ROM in \cite{morBenGW20} (denoted by \texttt{StrInt\_SO}) is computed based on interpolation.  
We choose the same interpolation points as in \cite{morBenGW20}, which yields $36$ basis vectors. To determine ROMs of order $r = \{10,20,30\}$, we take the same number of dominant basis vectors as the order out of $36$ basis vectors, and these dominant basis vectors are determined based on the QR decomposition of the basis vectors.

Next, to assess the quality of both ROMs, we compare transient responses of them with the full model using a control input $\bu(t) = 50\cdot \begin{bmatrix} \sin(20t) +1 \\ \sin(t)e^{-0.1t} \end{bmatrix}$. This is shown in~\Cref{fig:SO_timedomain}. Moreover, \Cref{tab:SO_outputerrors} shows a comparison of the $L_2$ and $L_\infty$-errors these three different ROMs. We observe that our method constructs ROMs which are consistently better both in $L_2$ and $L_\infty$-norm for all orders.

Finally, we would like to remark on computational aspects. Although we could obtain a better reduced-order model using \texttt{StrDsp\_SO} (approximately two orders of magnitude better for smaller orders), it comes with computational expenses. The philosophy of \texttt{StrDsp\_SO} relies on considering many interpolation points and, after that, on compression to determine global dominant subspaces. On the other hand, \texttt{StrInt\_SO} considers carefully choosing a few interpolation points. Consequently, \texttt{StrDsp\_SO} becomes much more computationally expensive as compared to \texttt{StrDsp\_SO}; for this example, $1~000$ interpolation points are considered for \texttt{StrDsp\_SO}, where only $3$ interpolation points are considered for \texttt{StrInt\_SO}. In the future, we will investigate active-learning-based approaches to sample a few points for \texttt{StrDsp\_SO} so that only relevant interpolation points (e.g., from $1~000$) are considered to determine the global dominant subspaces.

\definecolor{mycolor1}{rgb}{0.00000,0.44700,0.74100}%
\definecolor{mycolor2}{rgb}{0.00000,0.44700,0.74100}%
\definecolor{mycolor3}{rgb}{0.92900,0.69400,0.12500}%
\definecolor{mycolor4}{rgb}{0.92900,0.69400,0.12500}%
\definecolor{mycolor5}{rgb}{0.46600,0.67400,0.18800}%
\definecolor{mycolor6}{rgb}{0.46600,0.67400,0.18800}%

\begin{figure}[!tb]
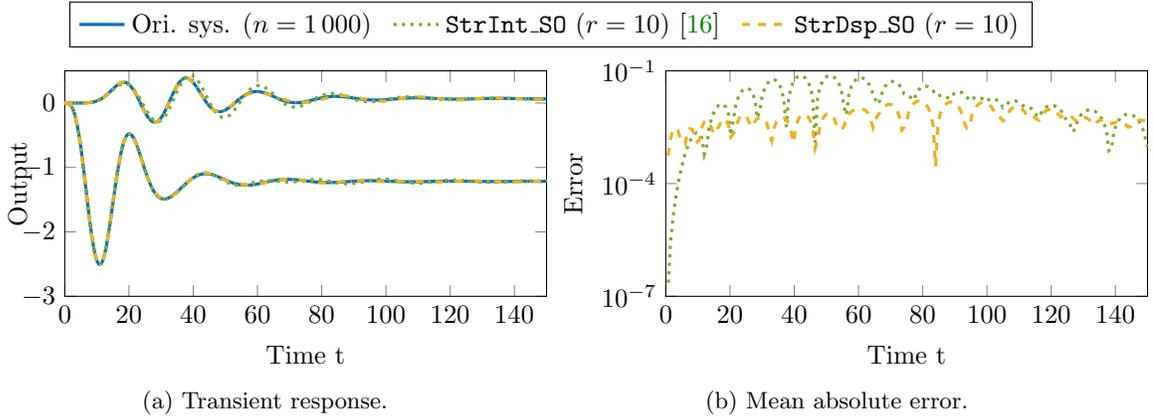

	\centering
	\begin{tikzpicture}
		\begin{customlegend}[legend columns=3, legend style={/tikz/every even column/.append style={column sep=0.2cm}} , legend entries={Ori. sys. $(n = 1\,000)$, \texttt{StrInt\_SO} $(r=10)$ \cite{morBenGW20}, \texttt{StrDsp\_SO} $(r=10)$}, ]
			\addlegendimage{color=mycolor1,solid,line width = 1.2pt}
			\addlegendimage{color=mycolor5,solid,line width = 1.2pt,dotted}
			\addlegendimage{color=mycolor3,solid,line width = 1.2pt,dashed}
		\end{customlegend}
	\end{tikzpicture}  
	\begin{subfigure}[t]{0.48\textwidth}
		\centering
		\setlength\fheight{3.0cm}
		\setlength\fwidth{0.9\textwidth}
		\input{Figures/SO_timedomain_transient10.tikz}
		\caption{Transient response.}
	\end{subfigure}
	\begin{subfigure}[t]{0.48\textwidth}
		\centering
		\setlength\fheight{3.0cm}
		\setlength\fwidth{0.9\textwidth}
		\input{Figures/SO_timedomain_err10.tikz}
		\caption{Mean absolute error.}
	\end{subfigure}
	\caption{Mechanical example: A comparison of the transient responses the original and ROMs of order $r=10$ for the control input $\bu(t) = 50\cdot \protect\begin{bmatrix} \sin(20t) +1 \\ \sin(t)e^{-0.1t}  \protect\end{bmatrix}$.}
	
	\label{fig:SO_timedomain}
\end{figure}

	\renewcommand{\arraystretch}{1.3}
\begin{table}[]
	\centering
	\begin{tabular}{|c|c|c||c|c|} \hline
		&  \multicolumn{2}{c}{$L_2$-error} &  \multicolumn{2}{c}{$L_\infty$-error} \\ \hline
	 & \texttt{StrInt\_SO} & \texttt{StrDsp\_SO}    & \texttt{StrInt\_SO} & \texttt{StrDsp\_SO}      \\ \hline
	  $r  =10$&      $4.68 \cdot 10^{-4} $      &    $1.01\cdot 10^{-4}$             &  $ 7.48\cdot 10^{-2}$   & $ 1.59\cdot 10^{-2}$    \\ \hline
$r  =20$&   $1.06\cdot 10^{-5}$         &           $6.31\cdot 10^{-6}$ & $1.63\cdot 10^{-3}$ &  $7.65\cdot 10^{-4}$   \\ \hline            
$r  =30$&   $8.59\cdot 10^{-8}$         &  $3.83\cdot 10^{-8}$    &  $2.16\cdot 10^{-5}$&  $3.98\cdot 10^{-6}$     \\ \hline      
	\end{tabular}
	\caption{Mechanical example: The $L_2$  and $L_\infty$-errors of the outputs between the original model and ROMs.}
\label{tab:SO_outputerrors}
\end{table}

\subsection{Parametric bilinear time-delay system}%
\label{subsec:delayexample}
In our last experiment, we consider an example from \cite{morBenGW2020par,morGosPBetal19} that models a time-delayed heated rod using a one-dimensional parametric heat equation:
\begin{equation}\label{eq:pde_delay}
	\partial_{t} v(x, t)  = \partial_{x}^2 v(x, t)
	-\bp\sin(x) v(x, t)
	+ \bp \sin(x) v(x, t - 1) + u(t),
\end{equation}
with homogeneous Dirichlet boundary conditions and $\bp \in [1,10]$.  Spatial discretization yields a parametric bilinear systems of the form:
\begin{equation}\label{eq:delay_model}
	\begin{aligned}
		\dot{\bx}(t) & = \bA(\bp) \bx(t) +		\bp \bA_{\mathrm{d}} \bx(t - 1) + \bN \bx(t) \bu(t) + \bB \bu(t), \\
		\by(t) & =  \bC \bx(t),
	\end{aligned}
\end{equation}
where $\bA(\bp) = \bA_{0} - \bp \bA_{\mathrm{d}}$.  We have employed $5\,000$ grid points to discretize the PDEs \eqref{eq:pde_delay}, thus leading to the state-space model \eqref{eq:delay_model} of order $n = 5\,000$. Additionally, this model is single-input single-output. The example  also completely fits in our set-up, discussed in \Cref{sec:setup}.  In this case,  
\begin{equation*}
	\begin{aligned}
		\cK(s, \bp) & = s\bI_{n} - (\bA_{0} - \bp \bA_{\mathrm{d}})
		- \bp e^{-s} \bA_{\mathrm{d}},\quad
		\bB(\bp)  = \bB,\\
		\bN(\bp) &= \bN,\quad \bH(\bp) = 0,\quad \text{and}~\bC(\bp) = \bC.
	\end{aligned}
\end{equation*}
Next, we aim at constructing a ROM using \Cref{algo:Inter_structured_para}. In order to employ the algorithm, we consider $1\,000$ logarithmically distributed frequency points in the range $\left[10^{-2},10^2\right]$, and for the parameter $\bp$, we randomly take the parameter in the considered parameter range. Consequently, we have tuples $\{\sigma_i,\bp_i\}$, $i \in\{1,\ldots,1\,000\}$. 

Next, we plot the decay of the singular values obtained from \Cref{algo:Inter_structured_para} in \Cref{figure:delay_SVD}, indicating a rapid decay. Hence, we can expect a good quality ROM of small order. Then, we  determine ROMs of order $r = \{5,10,20\}$ using \Cref{algo:Inter_structured_para}  (denoted by (\texttt{StrDsp\_delay})). We compare the quality of the (\texttt{StrDsp\_delay}) with a ROM obtained using the interpolation-based methodology proposed in \cite{morBenGW2020par} (denoted by \texttt{StrInt\_delay}). We use the interpolation points as in \cite{morBenGW2020par}, which yield $24$ basis vectors.  We construct \texttt{StrInt\_delay} ROMs of order $r = \{5,10,20\}$ using the dominant basis vectors of the $24$ basis vectors, which is done by taking QR decomposition of the basis vectors.


\begin{figure}[!tb]
	\centering
	\setlength\fheight{3.0cm}
	\setlength\fwidth{.5\textwidth}
	\input{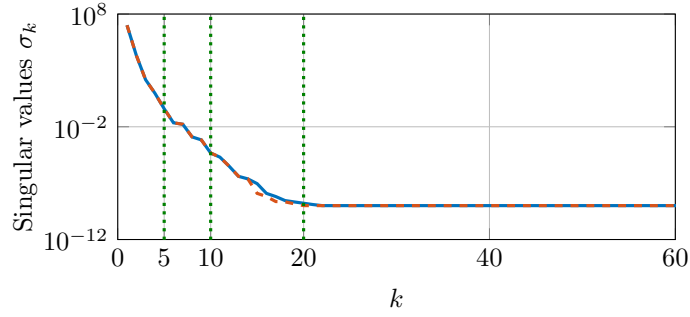}%

	\caption{Delay parametric example: The decay of singular values obtained from \Cref{algo:Inter_structured_para}.}
	\label{figure:delay_SVD}
\end{figure}

Next, we compare the time-domain simulation of both ROMs with the high-fidelity model for a wide range of parameters. We use  the same control input as  in~\cite{morBenGW2020par,morGosPBetal19}; that is $u(t)  = 0.05 \left( \cos(10t) + \cos(5t) \right)$, and vary the parameter in the range of interest. To perform time-domain simulation, we employ the explicit Euler method with time stepping $dt = 2\cdot 10^{-2}$. To measure the quality of the ROMs, we consider the following error function:
\begin{equation}\label{eq:delay_err}
	\cE(t, \bp) := \frac{\left\| \by(t; \bp) - \hat\by(t; \bp) \right\|}
	{ \max\limits_t\max\limits_{\bp \in [1,10]}\left\| \by(t; \bp) \right\| }.
\end{equation}
Then, we plot the transient responses and errors (as defined in \eqref{eq:delay_err}) for different parameters in \Cref{fig:delay_timedomain}. Furthermore, we compute  the maximum error in the time and parameter domain as follows:
\begin{equation*}
	\cE_{\max}:=	\max\limits_{\bp \in [1, 10]} \left( \max\limits_{t \in [0,10]}
	\cE(t, \bp) \right),
\end{equation*} 
which is reported in \Cref{tab:delay_error} (see the first columns of the table). 
%

We notice that  both ROMs perform equally good, but \texttt{StrDsp\_delay} is consistently better than \texttt{StrInt\_delay}. In particular, the proposed methodology can produce high-quality parametric ROMs of small orders as it determines the dominant subspace jointly for parameter and frequency in the given domains.  

\begin{figure}[!tb]
	\centering
	\setlength\fheight{3.0cm}
	\setlength\fwidth{.5\textwidth}
	\begin{subfigure}[t]{\textwidth}
		\centering
		\includegraphics[width = \textwidth]{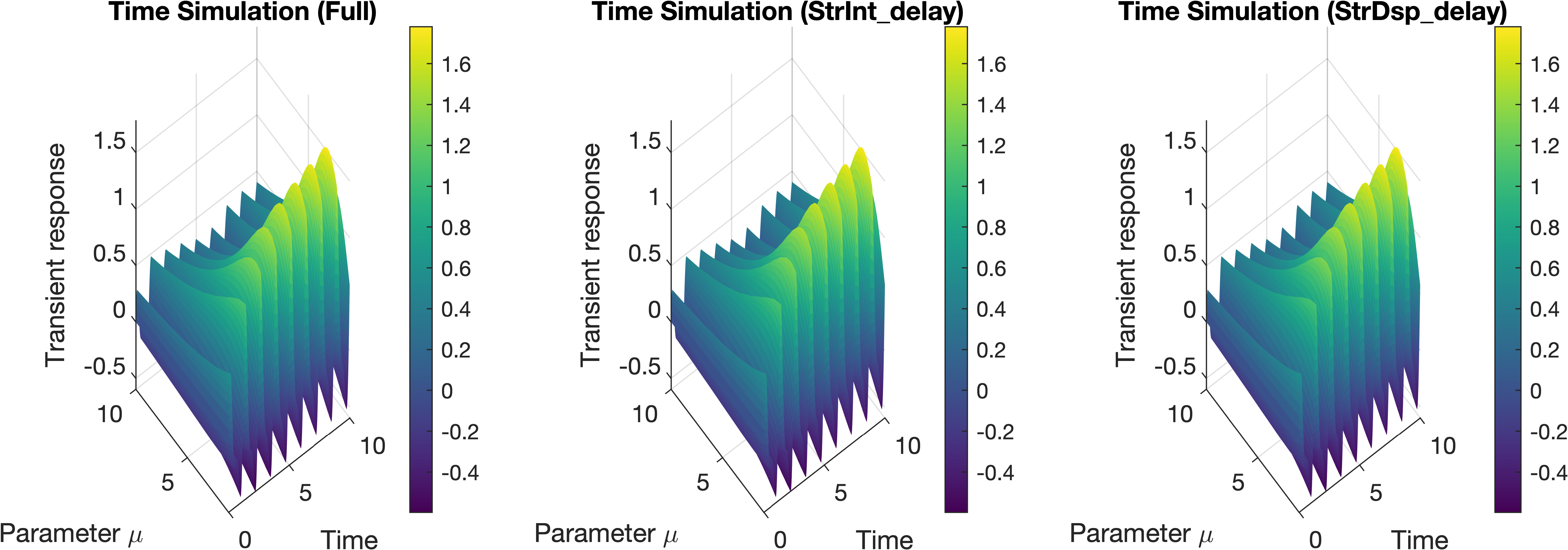}
		\caption{Transient response.}
	\end{subfigure}
	\begin{subfigure}[t]{\textwidth}
		\centering
		\includegraphics[width = 0.8\textwidth]{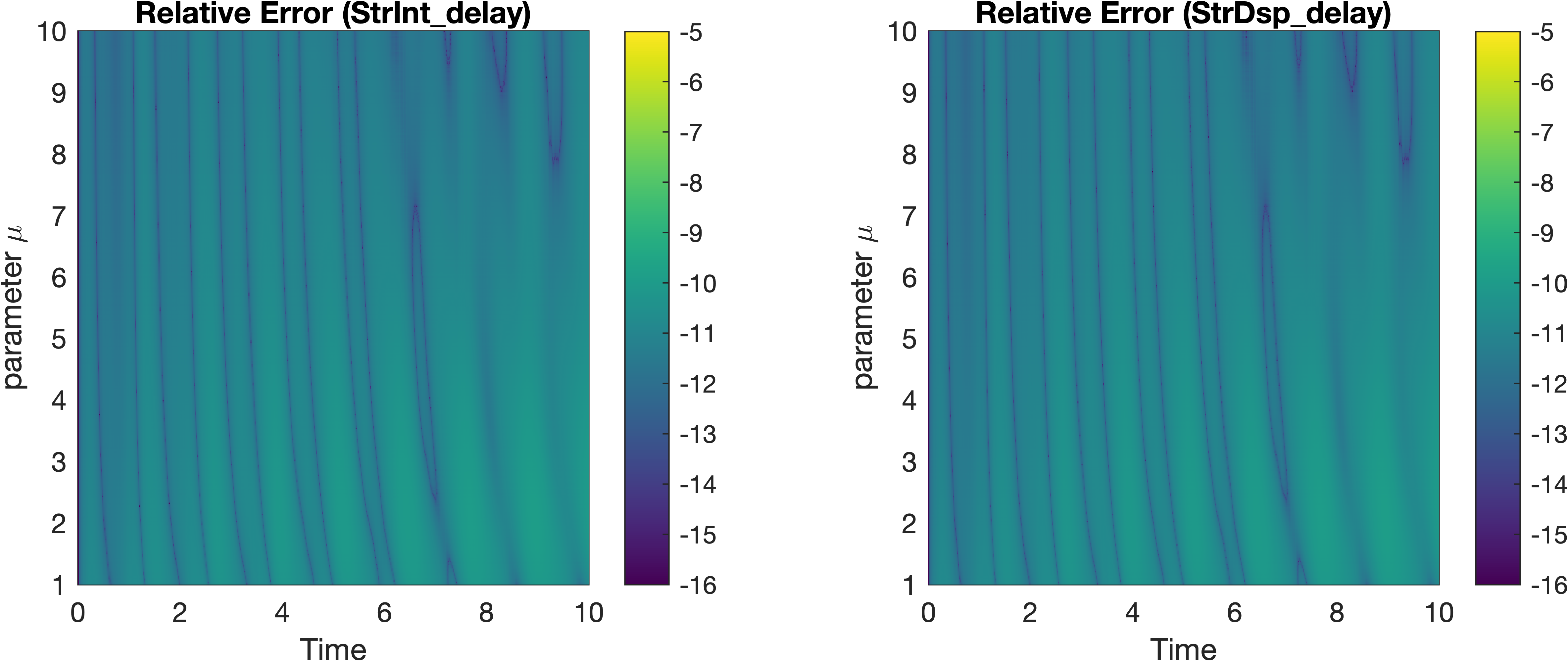}
		\caption{Relative error.}
	\end{subfigure}
	\caption{Delay parametric example: A comparison of transient response for an input $u(t) = 0.05 \left( \cos(10t) + \cos(5t) \right) $.}
	
	\label{fig:delay_timedomain}
\end{figure}


\begin{table}[tb]
	\centering
	\begin{tabular}{|c|c|c|} \hline
		& \texttt{StrInt\_SO} & \texttt{StrDsp\_SO}      \\ \hline
		$r  =5$&      $8.45\cdot 10^{-6} $      &    $1.00\cdot 10^{-6}$         \\ \hline
		$r  =10$&   $1.87\cdot 10^{-8}$         &           $3.25 \cdot 10^{-9}$  \\ \hline            
		$r  =20$&   $1.38\cdot 10^{-10}$         &  $1.39\cdot 10^{-10}$   \\ \hline      
	\end{tabular}
	\caption{Delay parametric example: Error between the full model and ROMs, namely \texttt{StrInt\_delay} and \texttt{StrDsp\_delay} of different orders.}
	\label{tab:delay_error}
\end{table}


\section{Conclusions}
In this paper, we have studied a model order reduction problem for structured nonlinear systems. To that aim, we have defined generalized transfer functions for structured systems based on the associated Volterra series. We have then shown how to construct a reduced-order model such that its generalized transfer functions interpolate those of the original model at the pre-defined frequency points. Subsequently, we have proposed an algorithm that allows determining the dominant subspaces to construct minimal interpolatory reduced-order models. Moreover, we have discussed extensions of those results to parametric nonlinear structured systems, as well as special structured systems such as  input-delay systems. Finally, we have illustrated the performance of the proposed methodology based on a couple of examples and compared it with the existing methodologies. In future work, we will focus on choosing interpolation points adaptively, which is essential to reduce computational efforts and to capture all the important dynamics of the system. To achieve this goal, one may tailor the ideas presented in \cite{morCheFB20}.

\section*{acknowledgements}
	We would like to express our gratitude to Dr.\ Steffen~W.~R.~Werner for providing the data and code from \cite{morBenGW20,morBenGW2020par}.	

\bibliographystyle{siamplain}
\bibliography{mor,igorBiblio}

\appendix

\section{Proof of Lemma 2.1}\label{proof:lemma21}
\begin{proof}
	\begin{itemize}
		\item[a).] First note that 
		\begin{equation}
			\kronF{\bx}{N} = \sum_{i_1 = 1}^n \cdots \sum_{i_{\mathrm N} = 1}^n \left(x_{i_1} x_{i_2} \cdots x_{i_n} \right) \left( \bbe_{i_1} \otimes \bbe_{i_2} \otimes \cdots\otimes \bbe_{i_n} \right).
		\end{equation}
		Since $x_{i_1} x_{i_2} \cdots x_{i_n} = x_{j_1} x_{j_2} \cdots x_{j_n} $, for every $(j_1,\ldots,j_n) \in \cS_\bi$, we can write 
		\begin{equation*}
			\kronF{\bx}{N} = \sum_{i_1 = 1}^n \cdots \sum_{i_{\mathrm N} = 1}^{n} \left(x_{i_1} x_{i_2} \cdots x_{i_n} \right) \left(\dfrac{1}{\alpha_\bi} \sum_{(j_1,\ldots,j_n) \atop\in \cS_\bi}\left( \bbe_{j_1} \otimes \bbe_{j_2} \otimes \cdots\otimes \bbe_{j_n} \right)\right).
		\end{equation*}
		Next, we have
		\begin{align*}
			\bH_{(1)} \kronF{\bx}{N} &=
			\sum_{i_1 = 1}^n  \cdots \sum_{i_{\mathrm N} = 1}^{n} \left(x_{i_1} x_{i_2} \cdots x_{i_n} \right) \bH_{(1)} \left(\dfrac{1}{\alpha_\bi} \sum_{(j_1,\ldots,j_n) \atop\in \cS_\bi}\left( \bbe_{j_1} \otimes \bbe_{j_2} \otimes \cdots\otimes \bbe_{j_n} \right)\right)\\
			&=
			\sum_{i_1 = 1}^n  \cdots \sum_{i_{\mathrm N} = 1}^{n} \left(x_{i_1} x_{i_2} \cdots x_{i_n} \right)  \left(\tilde\bH_{(1)} \left(:,i_1 + \sum_{l=2}^{\mathrm N}(i_l-1)\left(n^{l-1}\right) \right)  \right)\\
			&=
			\sum_{i_1 = 1}^n  \cdots \sum_{i_{\mathrm N} = 1}^{n} \left(x_{i_1} x_{i_2} \cdots x_{i_n} \right) \tilde\bH_{(1)} \left( \bbe_{i_1} \otimes \bbe_{i_2} \otimes \cdots\otimes \bbe_{i_n} \right)\\
			&= \tilde \bH_{(1)} \kronF{\bx}{N},
		\end{align*}
		which proves  part (a).
		\item [b).] We begin with
		\begin{align}
			\tilde\bH^{(1)} \left(\bq^{(1)}\otimes \cdots \otimes \bq^{(\mathrm N)}\right) & \nonumber\\
			& \hspace{-3cm}= \sum_{i_1=1}^n \cdots \sum_{i_{\mathrm N}=1}^n\tilde \bH_{(1)}\left(:, \left(i_1 + \sum_{l=2}^{\mathrm N}(i_l-1)n^l )\right)\right)  \left(q^{(1)}_{i_1} \cdots q^{(\mathrm N)}_{i_{\mathrm N}}\right) \nonumber\\
			& \hspace{-3cm} = \sum_{i_1=1}^n \cdots \sum_{i_{\mathrm N}=1}^n\bH_{(1)} \left(\sum_{(j_1,\ldots,j_n) \atop\in \cS_\bi}\dfrac{1}{\alpha_\bi}\left( \bbe_{j_1} \otimes \bbe_{j_2} \otimes \cdots\otimes \bbe_{j_n} \right) \right) \nonumber\\ 
			& \pushright{\left(q^{(1)}_{i_1} \cdots q^{(\mathrm N)}_{i_{\mathrm N}}\right).} \label{eq:tensor_per_vectors}
		\end{align}
		Since $(j_1,\ldots,j_n) \in \cS_\bi$, the above equation is invariant to the interchange of the indices $i_k$. Therefore, the Kronecker product of the $q_i$'s can appear in any order that would yield the same $\tilde \bH_{(1)} \left(\tilde\bq_1 \otimes \cdots \otimes \tilde\bq_{\mathrm N}\right)$, where  $\left(\tilde\bq_1, \ldots, \tilde \bq_{\mathrm N}\right)$ belongs to the set of all permutations of the set $\{\bq_1,\ldots,\bq_{\mathrm N}\}$. This proves the result.
		\item[c).] Assuming $l_j \in \{1,\ldots,n\}$ for $j \in\{1,\ldots, \mathrm N+1\}$, we have
		\begin{align*}
			&\bbe_{l_2}\tilde\bH_{(2)} \left(\bbe_{l_{\mathrm N +1}} \otimes \bbe_{l_{\mathrm N}} \otimes \cdots \otimes \bbe_{l_{3}} \otimes \bbe_{l_{1}}  \right) \\
			&\hspace{1.2cm}\text{using \eqref{eq:matricizationRelation}}\\
			&\hspace{1cm}= \bbe_{l_1}\tilde\bH_{(1)} \left(\bbe_{l_{\mathrm N +1}} \otimes \bbe_{l_{\mathrm N}} \otimes \cdots \otimes \bbe_{l_{3}} \otimes \bbe_{l_{2}}  \right)  \\
			& \hspace{1.2cm}\text{using \eqref{eq:tensor_per_vectors}}\\
			&\hspace{1cm}= \bbe_{l_1}\tilde\bH_{(1)} \left(\bbe_{l_{\mathrm N +1}} \otimes \bbe_{l_{\mathrm N}} \otimes \cdots  \otimes \bbe_{l_{m+1}} \otimes \bbe_{l_2} \otimes \bbe_{l_m} \otimes \bbe_{l_{m-1}} \otimes \bbe_{l_{3}}\right)\\
			& \hspace{1.2cm} \text{using \eqref{eq:matricizationRelation}}\\
			&\hspace{1cm}= \bbe_{l_2}\tilde\bH_{(m)} \left(\bbe_{l_{\mathrm N +1}} \otimes \bbe_{l_{\mathrm N}} \otimes \cdots  \otimes \bbe_{l_{m+1}} \otimes \bbe_{l_m} \otimes \bbe_{l_{m-1}} \otimes \bbe_{l_{3}} \otimes \bbe_{l_1}\right).
		\end{align*}
		This shows that the entries in $\tilde\bH_{(2)}$ and $\tilde\bH_{(m)}$ ($m\geq 2$) are equal, implying  the result.
	\end{itemize}
\end{proof}

\section{Fundamental solution using frequency domain methods}\label{appen:FunSol}
The fundamental solution of a linear operator can be defined in different ways. In this work, we follow the approaches that use the frequency domain representation of the linear operator. We refer the reader to  \cite{bellman1963differential,hale2013introduction} for more details. 

Let us start by defining the unilateral Laplace transform. Given a function $\bg(\cdot)$, its unilateral Laplace transform is given by
\[  \mathscr{L}(\bg(\cdot)) = \bG(s) = \int_0^{\infty} e^{-st} \bg(t)dt.  \]
where $\bG(\cdot)$ corresponds to the frequency domain representation of $\bg(\cdot)$.

Now, let us consider a linear operator $\cL$ such as shown  in Table \ref{tab:LinearStructureExamples}. By means of the Laplace transform, we obtain the frequency domain representation of $(\cL\bx)(t)$ as follows:
\begin{equation}\label{eq:LinOptTF}
\mathscr{L}((\cL \bx)(t)) = \cK(s) \bX(s), 
\end{equation}
where $\bX(s)$ corresponds to the Laplace transform of $\bx(t)$ and $ \cK(s)$ corresponds to the frequency domain representation of the operator $\cL$. Let us assume that the inverse of the Laplace transform of $\cK^{-1}(s)$ exists and is given as
\[  \mathscr{L}^{-1}(\cK^{-1}(\cdot)) := \mphi(\cdot).   \]
Then, $\mphi(t)$ is the \emph{fundamental solution} associated to the linear operator  $\cL$. Indeed, $\mphi(t)$ is  the solution of the functional differential equation 
\[ \left(\cL \bx\right)(t)	= \delta(t), \]
where $\delta(t)$ is the Dirac delta distribution and the initial conditions are all zero. Moreover, the inhomogeneous  equation
\[ \left(\cL \bx\right)(t) = \bg(t), \]
with $\bg(\cdot)$ being a suitable function, has a solution in convolution form
\[\bx(t) = \int_0^{t}  \mphi(\sigma)\bg(t-\sigma)d\sigma. \]

\section{Tangential interpolation-based MOR for MIMO systems}\label{appen:TangInterp} 
Here, we discuss a construction of an interpolating ROM for MIMO polynomial systems. Similar to the SISO case, the leading generalized transfer functions for a MIMO polynomial system are given
as follows:
\begin{subequations}\label{eq:general_TF_MIMO}
	\begin{align}
		\bF_{\mathrm L}(s_1) & =  \bC\cK^{-1}(s_1)\bB,
		\\
		\bF^{(\xi)}_{\mathrm H}(s_1,\ldots,s_{\xi+1}) &=  \bC\cK^{-1}(s_{\xi+1}) \bH_\xi \left(\cK^{-1}(s_\xi)\bB \otimes \cdots \otimes \cK^{-1}(s_{1})\bB\right),
		\\
		\bF_{\mathrm N}^{(\eta)}(s_1,\ldots, s_{\eta+1}) & =  \bC \cK^{-1}(s_{\eta+1}) \bN_\eta\left(\bI_m\otimes\cK^{-1}(s_{\eta})\bB \otimes \cdots \otimes \cK^{-1}(s_{1})\bB \right).
	\end{align}
\end{subequations}

\begin{lemma}\label{lemma:singleInterpolation}
	Consider the original system as given in \eqref{eq:struct_nonlin_sys}. Let $\sigma_i \in \C$, $i \in \{1,\ldots,\tilde r\}$, be interpolation points such that $\cK(s)$ is invertible for all $s \in \{\sigma_1,\ldots, \sigma_{\tilde r}\}$, and $\bb_i \in \C^m$ and $\bc_i\in\C^q$ for $i \in \{1,\ldots,{\tilde r}\}$ be right and left tangential directions corresponding to $\sigma_i$, respectively. Let $\bV$ and $\bW$ be defined as follows: 
	\begin{subequations}\allowdisplaybreaks
		\begin{align*}
			\cV_{\mathrm L}	&= 	\bigcup_{i= 1}^{\tr} \range{\cK^{-1}(\sigma_i)\bB\bb_{i}},
			\\
			\cV_{\mathrm N} 	&= 	\bigcup_{\eta= 1}^{d-1}\bigcup_{i= 1}^{\tr} \range{ \cK^{-1}(\sigma_i) \bN_\eta\left(\bI_m\otimes \cK^{-1}(\sigma_i)\bB\bb_i \otimes \cdots \otimes \cK^{-1}(\sigma_i)\bB\bb_i \right)},  
			\\
			\cV_{\mathrm H} 	&=  	\bigcup_{\xi= 2}^d\bigcup_{i= 1}^{\tr}  \range{ \cK^{-1}(\sigma_i) \bH_\xi \left(\cK^{-1}(\sigma_i)\bB\bb_i \otimes \cdots \otimes \cK^{-1}(\sigma_i)\bB\bb_i \right) },
			\\ 
			\cW_{\mathrm L}	&= 	\bigcup_{i= 1}^{\tr}\range{\cK^{-\top}(\sigma_i)\bC^\top\bc_i},
			\\
			\cW_{\mathrm N} 	&= 	\bigcup_{\eta= 1}^{d-1} \bigcup_{i= 1}^{\tr}\range{ \cK^{-1}(\sigma_{i}) \left(\bN_\eta \right)_{(2)}\left(\bI_m\otimes\cK^{-1}(\sigma_i)\bB\bb_{i} \otimes \cdots \otimes \cK^{-1}(\sigma_i)\bB\bb_{i} \otimes\cK^{-\top}(\sigma_i)\bC^\top\bc_i \right)}, 
			\\
			\cW_{\mathrm H} 	&=  	\bigcup_{\xi= 2}^d \bigcup_{i= 1}^{\tr} \range{ \cK^{-1}(\sigma_{i}) \left(\bH_\xi\right)_{(2)} \left(\cK^{-1}(\sigma_{i})\bB\bb_{i} \otimes \cdots \otimes \cK^{-1}(\sigma_{i})\bB\bb_{i}\otimes \cK^{-\top}(\mu_i)\bC^\top\bc_i \right) },
			\\
			\range{\bV} 	&= 	\cV_{\mathrm L}+ \cV_{\mathrm N}+ \cV_{\mathrm H},
			\\
			\range{\bW}	&= 	\cW_{\mathrm L}+ \cW_{\mathrm N}+ \cW_{\mathrm H}.
		\end{align*}
	\end{subequations}
	If a ROM is computed as shown in \eqref{eq:RedMatrices} using the projection matrices $\bV$ and $\bW$, where we assume $\bV$ and $\bW$ to be of full rank, then the following interpolation conditions are fulfilled:
	\begin{subequations}\allowdisplaybreaks
		\begin{align}
			\bF_{\mathrm L}(\sigma_i) \bb_i	&= 	\hat{\bF}_{\mathrm L}(\sigma_i)\bb_i, \label{eq:1}
			\\
			\bc_i^\top \bF_{\mathrm L}(\sigma_i)	&= 	\bc_i^\top\hat{\bF}_{\mathrm L}(\sigma_i),\label{eq:2} 
			\\
			\dfrac{d}{ds_1} \bc_i^\top \bF_{\mathrm L}(\sigma_i)\bb_i
			&= 	\dfrac{d}{ds_1} \bc_i^\top\hat{\bF}_{\mathrm L}(\sigma_i)\bb_i,  \label{eq:3}
			\\
			\bF_{\mathrm N}^{(\eta)}(\sigma_i,\ldots,\sigma_i)\left(\bI_m\otimes \bb_i^{\circled{\tiny {$\eta$}}}\right) 
			&= 	\hat \bF_{\mathrm N}^{(\eta)}(\sigma_i,\ldots,\sigma_i) \left(\bI_m\otimes \bb_i^{\circled{\tiny {$\eta$}}}\right), \label{eq:4}
			\\
			\bc_i^\top \bF_{\mathrm N}^{(\eta)}(\sigma_i,\ldots,\sigma_i)\left( \bI_m^{\circled{\tiny {$2$}}} \otimes  \bb_i^{\circled{\tiny {$\eta{-}1$}}}\right) 
			&= 	\bc_i^\top\hat{\bF}_{\mathrm N}^{(\eta)}(\sigma_i,\ldots,\sigma_i)\left( \bI_m^{\circled{\tiny {$2$}}} \otimes  \bb_i^{\circled{\tiny {$\eta{-}1$}}}\right)\label{eq:5}
			\\
			\dfrac{\partial}{\partial s_j}\bc_i^\top \bF_{\mathrm N}^{(\eta)}(\sigma_i,\ldots,\sigma_i)\left(\bI_m\otimes \bb_i^{\circled{\tiny {$\eta$}}}\right) 
			&= 	\dfrac{\partial}{\partial s_j}  \bc_i^\top \hat \bF_{\mathrm N}^{(\eta)}(\sigma_i,\ldots,\sigma_i) \left(\bI_m\otimes \bb_i^{\circled{\tiny {$\eta$}}}\right), \label{eq:6}
			\\
			\bF_{\mathrm H}^{(\xi)}(\sigma_i,\ldots,\sigma_i) \bb_i^{\circled{\tiny {$\xi$}}} 
			&= 	\hat{\bF}_{\mathrm H}^{(\xi)}(\sigma_i,\ldots,\sigma_i)  \bb_i^{\circled{\tiny {$\xi$}}}, \label{eq:7}
			\\
			\bc_i^\top \bF_{\mathrm H}^{(\xi)}(\sigma_i,\ldots,\sigma_i)\left( \bI_m \otimes  b_i^{\circled{\tiny {$\xi{-}1$}}}\right) 
			&= 	\bc_i^\top\hat{\bF}_{\mathrm H}^{(\xi)}(\sigma_i,\ldots,\sigma_i)\left( \bI_m \otimes  \bb_i^{\circled{\tiny {$\xi{-}1$}}}\right),\label{eq:8}
			\\
			\dfrac{\partial}{\partial s_j}\bc_i^\top \bF_{\mathrm H}^{(\xi)}(\sigma_i,\ldots,\sigma_i)\bb_i^{\circled{\tiny {$\xi$}}} 
			&= 	\dfrac{\partial}{\partial s_j}  \bc_i^\top\hat{\bF}_{\mathrm H}^{(\xi)}(\sigma_i,\ldots,\sigma_i) \bb_i^{\circled{\tiny {$\xi$}}}\label{eq:9}
		\end{align}
	\end{subequations}
	where $i \in \{1,\ldots, \tilde r\}$, $\xi \in \{2,\ldots,d\} $, $\eta \in \{1,\ldots,d\}$ and $\dfrac{\partial}{\partial s_j}$ denotes the partial derivative with respect to $s_j$ of a given function. 
\end{lemma}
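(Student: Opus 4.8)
The plan is to follow the proof of \Cref{thm:gen_interpolation} and its Hermite refinement in \Cref{theo:Hermite_intep}, inserting the tangential directions $\bb_i$ (on the input/right side) and $\bc_i$ (on the output/left side) into every intermediate identity. First I would record the linear building blocks: from the definitions of $\cV_{\mathrm L}$ and $\cW_{\mathrm L}$ together with the standard structured linear interpolation theory of \cite{morBeaG09}, one has
\begin{align*}
	\bV\,\chK^{-1}(\sigma_i)\,\bhB\bb_i &= \cK^{-1}(\sigma_i)\bB\bb_i, \\
	\bc_i^\top\,\bhC\,\chK^{-1}(\sigma_i)\,\bW^\top &= \bc_i^\top\,\bC\,\cK^{-1}(\sigma_i),
\end{align*}
for $i\in\{1,\ldots,\tr\}$, which immediately yield \eqref{eq:1} and \eqref{eq:2}, while \eqref{eq:3} is the classical bitangential Hermite condition and follows verbatim from \cite{morBeaG09}.

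For the right-tangential condition \eqref{eq:4} I would replay the chain of equalities that established \eqref{eq:bilinear1}: starting from $\bV\chK^{-1}(\sigma_i)\bhN_\eta\big(\bI_m\otimes\chK^{-1}(\sigma_i)\bhB\bb_i\otimes\cdots\otimes\chK^{-1}(\sigma_i)\bhB\bb_i\big)$, substitute $\bhN_\eta=\bW^\top\bN_\eta\bV^{\circled{\tiny{$\eta$}}}$, push $\bV$ through each Kronecker factor using the linear relation above, observe that $\cK^{-1}(\sigma_i)\bN_\eta\big(\bI_m\otimes\cK^{-1}(\sigma_i)\bB\bb_i\otimes\cdots\big)$ lies in $\cV_{\mathrm N}\subseteq\range{\bV}$ so it equals $\bV\bz$ for some $\bz$, and then collapse $\bV\chK^{-1}(\sigma_i)\bW^\top\cK(\sigma_i)\bV\bz=\bV\bz$; pre-multiplying by $\bC$ gives \eqref{eq:4}. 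The condition \eqref{eq:7} for the $\bH$-term is identical with $\cV_{\mathrm H}$ in place of $\cV_{\mathrm N}$.

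The left-tangential conditions \eqref{eq:5} and \eqref{eq:8} I would obtain by transposing the transfer function and applying the mode-$2$ matricization identity \eqref{eq:matricizationRelation}, exactly as in the proof of \eqref{eq:devbilinear}: this rewrites $\bc_i^\top\hat\bF_{\mathrm N}^{(\eta)}$ through $\left(\bhN_\eta\right)_{(2)}=\bV^\top(\bN_\eta)_{(2)}(\bV\otimes\cdots\otimes\bV\otimes\bW)$ acting on a Kronecker word built from $\chK^{-1}(\sigma_i)\bhB\bb_i$ and $\chK^{-\top}(\sigma_i)\bhC^\top\bc_i$, after which the dual linear relation and the containment $\cK^{-\top}(\sigma_i)(\bN_\eta)_{(2)}(\cdots\otimes\cK^{-\top}(\sigma_i)\bC^\top\bc_i)\in\cW_{\mathrm N}\subseteq\range{\bW}$ finish the argument. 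Here \Cref{lemma:sym_tensor} is used twice: its part (c) guarantees that all mode-$m$ matricizations for $m\ge 2$ coincide, so $\left(\bN_\eta\right)_{(2)}$ is legitimately the object that appears once the output direction is inserted into a state slot, and its commutation rule (part (b)) lets one permute the remaining Kronecker factors freely when matching original and reduced quantities. Finally, the Hermite-type relations \eqref{eq:3}, \eqref{eq:6} and \eqref{eq:9} follow by combining the above with $\tfrac{\partial}{\partial s}\cK^{-1}(s)=-\cK^{-1}(s)\big(\tfrac{\partial}{\partial s}\cK(s)\big)\cK^{-1}(s)$ and, once more, \Cref{lemma:sym_tensor} to reduce the statement for arbitrary $j$ to a single representative differentiation slot, precisely as in \Cref{theo:Hermite_intep}.

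I expect the main difficulty to be organizational rather than conceptual: correctly placing the $\bI_m$ block and the Kronecker powers $\bb_i^{\circled{\tiny{$\eta$}}}$, together with the output direction $\bc_i$, when transposing and when invoking \eqref{eq:matricizationRelation}, and in particular making sure that the \emph{mode-$2$} matricization (and not some other mode) is the one produced --- which is exactly where the ``all higher matricizations agree'' part of \Cref{lemma:sym_tensor} earns its keep. Since each individual step merely transcribes an argument already carried out for \Cref{thm:gen_interpolation,theo:Hermite_intep}, I would present the linear facts, give the detailed computation for \eqref{eq:4} and \eqref{eq:5} as representatives of the right- and left-tangential families, and note that \eqref{eq:6}--\eqref{eq:9} and the remaining conditions follow along the same lines.
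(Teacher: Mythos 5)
Your proposal is correct and follows essentially the same route as the paper, which simply states that the proof ``follows exactly the one of \Cref{thm:gen_interpolation}'' (together with the Hermite argument of \Cref{theo:Hermite_intep}); your expanded version --- linear relations with tangential directions inserted, the range-containment/collapse argument for the right-tangential conditions, and the mode-2 matricization plus \Cref{lemma:sym_tensor} for the left-tangential and derivative conditions --- is precisely the intended argument. No gaps.
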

\begin{proof}
	The proof of these interpolation conditions  follows exactly the one of \Cref{thm:gen_interpolation}. 
\end{proof}
\end{document}